 \tikzset{every picture/.style={baseline=-.65ex} }
 \tikzset{ext/.style={circle, draw,inner sep=1pt},int/.style={circle,draw,fill,inner sep=1pt},nil/.style={inner sep=1pt}}
\tikzset{->-/.style={decoration={
    markings,
    mark=at position .5 with {\arrow{>}}},postaction={decorate}}}
\theoremstyle{plain}
\newtheorem{theorem}{Theorem}
   \numberwithin{theorem}{section}
\newtheorem{proposition}[theorem]{Proposition}
\newtheorem{claim}[theorem]{Claim}
\newtheorem{example}[theorem]{Example}
\newtheorem*{theorem*}{Theorem}
\theoremstyle{definition}
\newtheorem{definition}[theorem]{Definition}
\theoremstyle{remark}
\newtheorem{remark}[theorem]{Remark}
\newtheoremstyle{italic-env}
{}                
{}                
{\slshape}        
{}                
{\bfseries}       
{}               
{0pt}               
{}                
\theoremstyle{italic-env}
\newtheoremstyle{plain-env}
{}                
{}                
{\upshape}        
{}               
{\bfseries}       
{}              
{0pt}               
{}                
\theoremstyle{plain-env}
\def \C {\mathbb C}
\def \Q {\mathbb Q}
\def \cM {\mathcal{M}}  
\def \cS {\mathcal{S}}
\def \d {\partial}
\def \Im {{\rm Im}\,}
\newcommand{\oleg}[1]{
  \begin{tikzpicture}
    \node (v) at (0,0) {$\omega$};
    \node (w) at (1,0) {#1};
    \draw (v) edge (w);
  \end{tikzpicture}
}
\newcommand{\eleg}[1]{
  \begin{tikzpicture}
    \node (v) at (0,0) {$\epsilon$};
    \node (w) at (1,0) {#1};
    \draw (v) edge (w);
  \end{tikzpicture}
}
\newcommand{\tripleo}{
  \begin{tikzpicture}
    \node[int] (i) at (0,.5) {};
    \node (v1) at (-.5,-.2) {$\omega$};
    \node (v2) at (0,-.2) {$\omega$};
    \node (v3) at (.5,-.2) {$\omega$};
  \draw (i) edge (v1) edge (v2) edge (v3);
  \end{tikzpicture}
}
\newcommand{\threelegs}[3]{
  \begin{tikzpicture}
    \node[int] (i) at (0,.5) {};
    \node (v1) at (-.5,-.2) {#1};
    \node (v2) at (0,-.2) {#2};
    \node (v3) at (.5,-.2) {#3};
  \draw (i) edge (v1) edge (v2) edge (v3);
  \end{tikzpicture}
}
\newcommand{\fourlegs}[4]{
  \begin{tikzpicture}
    \node[int] (i) at (0.25,.5) {};
    \node (v1) at (-.5,-.2) {#1};
    \node (v2) at (0,-.2) {#2};
    \node (v3) at (0.5,-.2) {#3};
    \node (v4) at (1,-.2) {#4};
  \draw (i) edge (v1) edge (v2) edge (v3) edge (v4);
  \end{tikzpicture}
}
\newcommand{\twotwolegs}[4]{
  \begin{tikzpicture}
    \node[int] (i) at (0,.5) {};
    \node[int] (j) at (1,.5) {};
    \node (v1) at (-.5,-.2) {#1};
    \node (v2) at (0,-.2) {#2};
    \node (v3) at (1,-.2) {#3};
    \node (v4) at (1.5,-.2) {#4};
  \draw (i) edge (v1) edge (v2) edge (j) (j) edge (v3) edge (v4);
  \end{tikzpicture}
}
\newcommand{\twoonetwolegs}[5]{
  \begin{tikzpicture}
    \node[int] (i) at (0,.5) {};
    \node[int] (j) at (0.75, 0.5) {};
    \node[int] (k) at (1.5,.5) {};
    \node (v1) at (-.5,-.2) {#1};
    \node (v2) at (0,-.2) {#2};
    \node (v3) at (0.75,-.2) {#3};
    \node (v4) at (1.5,-.2) {#4};
    \node (v5) at (2,-.2) {#5};
  \draw (i) edge (v1) edge (v2) edge (j) (j) edge (v3) edge (k) (k) edge (v4) edge (v5);
  \end{tikzpicture}
}
\newcommand{\fivelegs}[5]{
  \begin{tikzpicture}
    \node[int] (i) at (0.25,.5) {};
    \node (v1) at (-.75,-.2) {#1};
    \node (v2) at (-0.25,-.2) {#2};
    \node (v3) at (0.25,-.2) {#3};
    \node (v4) at (0.75,-.2) {#4};
    \node (v5) at (1.25,-.2) {#5};
  \draw (i) edge (v1) edge (v2) edge (v3) edge (v4) edge (v5);
  \end{tikzpicture}
}
\newcommand{\twothreelegs}[5]{
  \begin{tikzpicture}
    \node[int] (i) at (0,.5) {};
    \node[int] (j) at (1,.5) {};
    \node (v1) at (-.5,-.2) {#1};
    \node (v2) at (0,-.2) {#2};
    \node (v3) at (0.75,-.2) {#3};
    \node (v4) at (1.25,-.2) {#4};
    \node (v5) at (1.75,-.2) {#5};
  \draw (i) edge (v1) edge (v2) edge (j) (j) edge (v3) edge (v4) edge (v5);
  \end{tikzpicture}
}
\newlength{\fsize}
\def\fsize{12pt}    
\def\graphscale {0.7} 
\begin{document}

\title[Computation of the Excess Four Case]{Computation of Weight 11 Compactly Supported Cohomology of Moduli Spaces of Curves}

\author[L\'eon Burkhardt]{L\'eon Burkhardt}
\address{Departement of Mathematics, ETH Zurich, R\"amistrasse 101, 8092 Zurich, Switzerland}
\email{lburkhardt@student.ethz.ch}

\begin{abstract}
  In their homonymous article, Sam Payne and Thomas Willwacher construct a combinatorial graph complex to compute the weight 11 part of the compactly supported cohomology of the moduli space of curves $\cM_{g,n}$ and compute explicitly the cohomology of the introduced graph complexes in cases of complexes of excess zero, one, two, and three.
  In this paper, we extend the computation the cohomology to excess four graph complexes. 
  Along the way, we give more details on generators, the computation process, and the definition of the graph complex.
  \end{abstract}

  \maketitle

\setcounter{tocdepth}{2}  
\tableofcontents
\section{Introduction}

In their homonymous article, Sam Payne and Thomas Willwacher construct a combinatorial graph complex to compute the weight 11 part of the compactly supported cohomology of the moduli space of curves $\cM_{g,n}$.
The central result of the paper establishes a correspondence between these groups and the tensor product of cohomology groups of a graph complex $B_{g,n}$ constructed by the authors and cohomology of the Deligne-Mumford compactification $\overline{\cM}_{1,11}$. Let us note that $H^11 (\overline{\cM}_{1,11})$ is two dimensional \cite{CLP23}.
\begin{proposition}[\cite{PW23}, Proposition 1.7]\label{prop:PW-iso}
  The weight 11 compactly supported cohomology of $\cM_{g,n}$ is isomorphic to the tensor product of $ H^{11} (\overline{\cM}_{1,11})$ with the cohomology of $B_{g,n}$:
  $$gr_{11} H_c^k(\cM_{g,n}) \cong H^k(B_{g,n}) \otimes  H^{11} (\overline{\cM}_{1,11})$$
\end{proposition}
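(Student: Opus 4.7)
The plan is to run Deligne's weight spectral sequence associated to the smooth Deligne--Mumford compactification $\overline{\cM}_{g,n} \supset \cM_{g,n}$, whose boundary is a normal-crossing divisor stratified by the closed boundary strata $\overline{\cM}_\Gamma$ indexed by stable dual graphs $\Gamma$. Each such stratum is, up to a finite action by $\Aut(\Gamma)$, a product $\prod_v \overline{\cM}_{g(v),n(v)}$ over the vertices $v$ of $\Gamma$, and the $E_1$-differential is the combinatorial boundary map induced by contracting edges. This spectral sequence computes $gr^W_\bullet H_c^\ast(\cM_{g,n})$ from the cohomology of boundary strata in $\overline{\cM}_{g,n}$.

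The next step is to isolate the weight $11$ piece. Since pure Hodge structures of odd weight are rare in the cohomology of $\overline{\cM}_{g',n'}$ and, in particular, every Tate class has even weight, the only way for a product $\bigotimes_v H^\ast(\overline{\cM}_{g(v),n(v)})$ to contribute to weight $11$ is for exactly one vertex $v_0$ to carry a pure weight $11$ class and every other vertex to carry a weight zero Tate class. The key vanishing input, building on the classification of low-genus cohomology and the result that $H^{11}(\overline{\cM}_{1,11})$ is two dimensional \cite{CLP23}, is that the \emph{only} source of a nontrivial pure weight $11$ class at a vertex is $H^{11}(\overline{\cM}_{1,11})$, so the surviving strata are precisely those whose dual graph has a distinguished vertex $v_0$ with $g(v_0)=1$, $n(v_0)=11$.

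After this reduction, one assembles the surviving summands into a chain complex. For each stable graph $\Gamma$ with a marked $(1,11)$-vertex, the $E_1$-contribution is $H^{11}(\overline{\cM}_{1,11})$ tensored with a one-dimensional weight zero contribution at every other vertex, with appropriate $\Aut(\Gamma)$-signs. Pulling out the common factor $H^{11}(\overline{\cM}_{1,11})$ leaves precisely the combinatorial complex $B_{g,n}$, and the edge-contraction differential inherited from the spectral sequence matches the one in the definition of $B_{g,n}$. Finally, a weight-parity argument shows that no further differentials can modify this weight $11$ piece, so the spectral sequence degenerates here at $E_2$, yielding the isomorphism
$$gr^W_{11} H_c^k(\cM_{g,n}) \;\cong\; H^k(B_{g,n}) \otimes H^{11}(\overline{\cM}_{1,11}).$$

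The main obstacle I expect is the combinatorial bookkeeping of the differential: one must verify that contracting an edge incident to the marked $(1,11)$-vertex either gives zero (when it would alter the $(g,n)$-type of that vertex in a way incompatible with carrying the weight $11$ class) or is correctly encoded by the stable-graph gluing, and that the $\Aut(\Gamma)$-signs produced by the weight spectral sequence agree with those used to define $B_{g,n}$. Proving this compatibility, together with establishing the vanishing statements that single out $\overline{\cM}_{1,11}$ as the unique source of weight $11$, is the crux of the argument.
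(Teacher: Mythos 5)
This proposition is not proved in the paper at all: it is quoted verbatim from \cite{PW23} (Proposition 1.7 there) and used as a black box, so there is no internal proof to compare your argument against. What I can do is compare your sketch with the actual argument in \cite{PW23}. Your outline does capture the broad strategy used there: one computes $gr_{11}H_c^\ast(\cM_{g,n})$ from a weight/boundary (Getzler--Kapranov type) complex whose terms are $\Aut(\Gamma)$-(co)invariants of $\bigotimes_v H^{k_v}(\overline{\cM}_{g(v),n(v)})$ over stable graphs $\Gamma$, isolates the contributions with $\sum_v k_v = 11$, and identifies the result with $H^{11}(\overline{\cM}_{1,11})$ tensored with a purely combinatorial complex.

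However, two steps in your sketch are doing much more work than you acknowledge. First, the reduction to a single vertex carrying an $11$-class does not follow from ``Tate classes have even weight'' or odd-weight classes being ``rare'': since $\overline{\cM}_{g',n'}$ is smooth and proper, $H^k$ is pure of weight $k$, so what you actually need is the vanishing of $H^k(\overline{\cM}_{g',n'})$ for all \emph{odd} $k\le 9$ (results of Arbarello--Cornalba, Bergstr\"om--Faber, and Canning--Larson--Payne), together with the Canning--Larson--Payne structure theorem expressing $H^{11}(\overline{\cM}_{g',n'})$ for arbitrary $(g',n')$ in terms of classes pushed forward from $\overline{\cM}_{1,11}$ --- not merely the two-dimensionality of $H^{11}(\overline{\cM}_{1,11})$. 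Without these specific theorems the key reduction is unsupported. Second, the surviving complex is \emph{not} literally $B_{g,n}$ indexed by stable graphs with a marked $(1,11)$-vertex: as Section \ref{subsec:GraphCx} of this paper indicates, $B_{g,n}$ has a single special vertex of loop order $g-1$ with $w\ge 11$ distinguished half-edges (the $\omega$-decorations recording which boundary/$\psi$-type classes feed the $H^{11}(\overline{\cM}_{1,11})$ factor), and one arrives at it only after several nontrivial reductions and quasi-isomorphisms performed in \cite{PW23}, including the quotient by the subcomplex with $w<11$. So your proposal is a reasonable roadmap of \cite{PW23} but, as you yourself note at the end, the crux --- the vanishing inputs and the identification of the differential and signs with those of $B_{g,n}$ --- is precisely what is left unproved.
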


In particular, this new tool unveils the exponential growth of $H_c^{2g+k}(\cM_{g,0})$ for numerous new values of $k$. 
Beside these results, the authors also compute explicitly the cohomology of the introduced graph complexes in cases that are simple with regard to a new statistic named excess (see Section \ref{subsec:Excess} for more details). They show that graphs of low excess are relatively easy to determine, and compute the cohomology groups of complexes of excess zero, one, two, and three.

In this paper we build up on these computations and determine the cohomology of all graph complexes with excess four, yielding explicit results for various additional cohomology groups of moduli spaces of curves.

Our computations lead to the following explicit results for cohomology groups of excess four graph complexes:

\begin{theorem}\label{thm:cohomologyBgn}
  In the setting above, the cohomology of the graph complexes of excess four is given in terms of irreducible representations $V_\lambda$ of $S_n$ by:
  \vskip \fsize
  \begin{flushleft}
    $H^k(B_{1,13}) = 
  \begin{cases}
     V_{51^8} \oplus V_{3^21^7} \oplus V_{321^8} \oplus V_{2^21^9} & \text{ for } k= 13, \\
     0 & \text{ otherwise.}
  \end{cases}
  $
  \end{flushleft}
  
  \vskip \fsize
  \begin{flushleft}
    $H^k(B_{3,10}) = 
  \begin{cases}
     V_{31^7} & \text{ for } k=15,  \\
     V_{51^5} \oplus V_{51^5} \oplus V_{421^4} \oplus V_{41^6} \oplus V_{3^21^4} \oplus\\
      V_{3^21^4} \oplus V_{321^5} \oplus V_{321^5} \oplus V_{2^21^6} \oplus V_{2^21^6} & \text{ for } k=16, \\
     0  & \text{ otherwise.}
  \end{cases}$
  \end{flushleft}

  \vskip \fsize
  \begin{flushleft}
    $H^k(B_{5,7}) = 
  \begin{cases}
     V_{31^4} \oplus V_{31^4} & \text{ for } k=18,  \\
     V_{51^2} \oplus V_{51^2} \oplus V_{51^2} \oplus V_{421} \oplus V_{41^3} \oplus \\
     V_{3^21} \oplus V_{3^21} \oplus V_{321^2} \oplus V_{321^2} \oplus V_{2^21^3} \oplus V_{2^21^3} & \text{ for } k=19,  \\
     0 & \text{ otherwise.}
  \end{cases}$
  \end{flushleft}
  
  \vskip \fsize
  \begin{flushleft}
    $H^k(B_{7,4}) = 
  \begin{cases}
    V_{31} \oplus V_{31} & \text{ for } k= 21, \\
    V_{2^2} \oplus V_{2^2} & \text{ for } k=22,  \\
    0 & \text{ otherwise.}
  \end{cases}$
  \end{flushleft}
  
  \vskip \fsize
  \begin{flushleft}
    $H^k(B_{9,1}) = 
  \begin{cases}
    V_1 & \text{ for } k=22,  \\
    0 & \text{ otherwise.}
  \end{cases}$
  \end{flushleft}
  \vskip \fsize
\end{theorem}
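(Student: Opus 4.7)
The plan is to carry out an explicit computation, one pair $(g,n)$ at a time, exploiting the fact that the excess four condition limits the generators of $B_{g,n}$ to a finite and fully listable set of graphs. This is the same strategy as in the excess zero through three cases of Payne and Willwacher, only with a slightly larger combinatorial zoo: since the excess bounds both the first Betti number of the underlying graph and the number of its internal vertices of valence at least three, for each of the five pairs $(g,n) \in \{(1,13),(3,10),(5,7),(7,4),(9,1)\}$ there is only a short list of admissible topological types.

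The first step is generator enumeration. For each admissible topological type, I would attach the $n$ numbered external legs and the required $\omega$-decorations in all ways compatible with the genus constraint, identifying configurations related by graph automorphisms and recording the resulting signs. This produces, for each $(g,n)$ and each cohomological degree $k$, the space $C^k(B_{g,n})$ as a finite-dimensional $S_n$-representation, with character that can be read off directly from the stabilizers of the listed graphs. The second step is to compute the differential $d \colon C^k \to C^{k+1}$ and extract its cohomology as an $S_n$-module. Because $d$ is $S_n$-equivariant, Schur's lemma reduces the rank computation to each isotypic component separately: I would determine the multiplicity of each Specht module $V_\lambda$ in $C^k$ by pairing characters, and then compute the rank of the induced map on each multiplicity space. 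The differential itself is mechanical once the sign conventions are fixed, since it is built from local operations on the listed generators. Collecting kernels modulo images over all $\lambda$ then yields $H^k(B_{g,n})$ in the form stated, and specialising to the five pairs gives the five displayed decompositions.

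The main obstacle, I expect, is not conceptual but book-keeping. For $(g,n)=(1,13)$ and $(g,n)=(3,10)$ the number of generators is substantial, each generator induces a potentially large $S_n$-representation from its stabilizer, and the differential couples many of them; tracking signs from edge orderings and from graph automorphisms while simultaneously maintaining the representation-theoretic decomposition is where all of the actual difficulty lies. Performing the isotypic decomposition \emph{before} any row reduction, rather than diagonalising a large matrix and afterwards identifying its symmetries, is what keeps each of the five computations feasible; at the other extreme, $(g,n)=(9,1)$ collapses to a nearly trivial check once the generators are in hand, and should serve as a useful sanity-check consistent with Proposition \ref{prop:PW-iso}.
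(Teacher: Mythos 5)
Your overall strategy---enumerate the finitely many admissible generators, use $S_n$-equivariance and Schur's lemma to work isotypic component by isotypic component, and read off kernels modulo images---is indeed the skeleton of the paper's argument, and nothing in it is wrong in principle. However, as written the proposal is a plan rather than a proof: none of the five computations is actually performed, no generator list is produced, and no cancellation is exhibited, so the specific multiplicities in the statement (e.g.\ the two copies of $V_{31^4}$ in $H^{18}(B_{5,7})$, or the single $V_1$ in $H^{24}(B_{9,1})$) are not derived from anything. The entire content of the theorem lives in exactly the ``book-keeping'' you defer.

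Beyond that, the paper organizes the computation quite differently in two ways that you would need to rediscover to make the hand computation feasible. First, it works with the blown-up component decomposition and the additivity of excess, rather than with topological types of the whole graph for each $(g,n)$ separately: since every connected component has excess $0$--$4$ summing to at most $4$ (and $h^1=0$ in this range), each generator determines a partition of $4$, and the whole family is treated uniformly in $(g,n)$, specializing only at the end. Second, and more importantly, before computing anything the paper quotients out an acyclic subcomplex $K_{g,n}$ of graphs containing ``non-essential'' excess-four components (those with $w\geq 4$), proving via a component-count filtration that the quotient map is a quasi-isomorphism; this shrinks the generator list substantially. The remaining cohomology is then extracted from the spectral sequence of the filtration by the lexicographic order on partitions of $4$, with the excess-zero and excess-two generators (which must be included, since $\delta_\omega$ changes excess by two) entering on later pages. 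Your per-$(g,n)$, per-degree row reduction would eventually give the same answer, but without the reduction to essential components and the partition filtration you have no mechanism for locating the cancellations, and the proposal gives no evidence that the computation, once done, yields the stated groups.
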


\begin{remark}
  Note that these results coincide with the Euler characteristics computed in \cite{PW23}.
\end{remark}

As an immediate corollary, we obtain the weight 11 compactly supported cohomology of $\cM_{g,n}$ for 
$$(g,n) \in \{(1,13),(3,10),(5,7),(7,4),(9,1)\}$$
using the isomorphism in Proposition \ref{prop:PW-iso}.

The rest of this paper is structured as follows.
We begin in Section \ref{sec:Background} by a detailed presentation of the graph complex defined in by Payne-Willwacher and the tools used in our computations.
In Section \ref{sec:Generators}, we determine the module structure of our graph complex by determining all generators. These are given by combinations of the obtained connected components of suitable excess.
In Section \ref{sec:Cohomology}, we compute the boundary operator on our chain complex and compute the cohomology. We do so by using a spectral sequence arising naturally from the lexicographic order on partitions of 4 associated with the generators.

\vskip\fsize
\emph{Acknowledgements: I would like to thank my supervisor Professor Thomas Willwacher for proposing this subject to me and for his support throughout this project.}

\section{Background} \label{sec:Background}

In this section, we introduce the objects of our study. While we refer to \cite{PW23} for the original ideas, we aim here to give more details and insights about the constructions and computations. Our aim is to make the ideas more accessible to experts from other fields and young researchers beginning in this field.

\subsection{The Graph Complex}\label{subsec:GraphCx}

In this section, we study the graph complexes $B_{g,n}$ defined in \cite{PW23}, to which we refer the reader for full details.

\subsubsection{Generating Graphs}
$B_{g,n}$ is generated by connected graphs $\Gamma$ having one special vertex, $n$ labelled legs, and loop order $g-1$. The special vertex has valency of at least $2$. All other vertices have valency of at least $3$. We do not allow tadpoles except at the special vertex.
A subset of the half-edges incident to the special vertex is marked. We will refer to these half-edges as \emph{distinguished} in the remaining of this text and denote the cardinality of the subset of distinguished half-edges by $w$. Further, all edges of $\Gamma$ but the $n$ labelled legs will be called \emph{structural edges}. 

\begin{example}
  A typical generator $\Gamma$ of $B_{g,n}$ looks as depicted for instance in Figure \ref{fig:typicalGen} below. Here, the marked vertex is represented by two concentric circles and the distinguished half-edges are marked by arrows.
  
  \begin{figure}[ht]
    \begin{center}
      \resizebox{0.25\hsize}{!}{
        $\begin{tikzpicture}[scale=1]
          \node[ext,accepting] (v0) at (0,0){};
          \node[int] (v1) at (140:.75) {};
          \node[font = \small] (v6) at  (140:1.5) {8};
          \node[int] (v2) at (-90:0.7) {};       
          \node[int] (v7) at (-45:.7) {};
          \node[font=\small] (v3) at (10:1) {1};
          \node[font=\small] (v4) at (45:1) {$\ddots $};
          \node[font=\small] (v5) at (80:1) {7};
          \node[int] (v8) at (-160:1) {};
          \draw (v0) 
          edge[->-] (v5) edge[->-] (v4) edge[->-] (v3) 
          edge[->-,bend right] (v1) edge [-,bend left] (v1) 
          edge[->-,bend right] (v8) edge[->-] (v8) edge [->-,bend left] (v8) 
          edge[bend right] (v2) edge [->-] (v2) edge[->-, bend left] (v7) edge[->-] (v7);
          \draw (v7) edge (v2);
          \draw (v1) edge (v6);
          \end{tikzpicture}$
      }
    \end{center}
    \caption{A typical generator of $B_{g,n}$} \label{fig:typicalGen}
  \end{figure}
\end{example}

For readability reasons, we will use a representation in ``blown-up'' components to represent such graphs. This representation is obtained by removing the special vertex and listing the resulting connected components. We label  the distinguished half-edges with $\omega$ and the non-distinguished half-edges with $\epsilon$. Figure \ref{fig:blownup} illustrates this procedure.

\vskip\fsize
\begin{figure}[ht]
  \begin{center}
    \resizebox{0.9\hsize}{!}{
      $\begin{tikzpicture}[scale=1]
        \node[ext,accepting] (v0) at (0,0){};
        \node[int] (v1) at (140:.75) {};
        \node[font = \small] (v6) at  (140:1.5) {8};
        \node[int] (v2) at (-90:0.7) {};       
        \node[int] (v7) at (-45:.7) {};
        \node[font=\small] (v3) at (10:1) {1};
        \node[font=\small] (v4) at (45:1) {$\ddots $};
        \node[font=\small] (v5) at (80:1) {7};
        \node[int] (v8) at (-160:1) {};
        \draw (v0) 
        edge[->-] (v5) edge[->-] (v4) edge[->-] (v3) 
        edge[->-,bend right] (v1) edge [-,bend left] (v1) 
        edge[->-,bend right] (v8) edge[->-] (v8) edge [->-,bend left] (v8) 
        edge[bend right] (v2) edge [->-] (v2) edge[->-, bend left] (v7) edge[->-] (v7);
        \draw (v7) edge (v2);
        \draw (v1) edge (v6);
        \end{tikzpicture}
  \quad \quad \leftrightarrow \quad \quad 
  \oleg{1} \ldots \oleg{7} \threelegs{8}{$\omega$}{$\epsilon$} \tripleo \twotwolegs{$\epsilon$}{$\omega$}{$\omega$}{$\omega$}$
    }
  \end{center}
  \caption{Representation in blown up components} \label{fig:blownup}
\end{figure}

The graphs come with an orientation given by an ordering of the structural edges and of distinguished half-edges. The complete data of a generator is therefore of the form 
$$ (\Gamma, e_1 \wedge \cdots \wedge e_k \wedge h_1 \wedge \cdots \wedge h_w)$$
for $\Gamma$ a graph as described above and a choice of ordering $o = e_1 \wedge \cdots \wedge e_k \wedge h_1 \wedge \cdots \wedge h_w$ of the set of structural edges $\{e_1, \ldots, e_r\}$ and of distinguished half-edges $\{h_1, \ldots, h_w\}$.

Finally, we make the following identifications. Different orientations are identified with a sign given by the signature of the permutation that relates them. Additionally, we identify isomorphic graphs and quotient out the subcomplex of graphs with $w<11$.

\begin{remark}\label{rmk:symmetry}
  As an easy consequence of the definition of $B_{g,n}$ (in particular the identifications made) we note that graphs with component \scalebox{\graphscale}{$\oleg{$\omega$} $},  \scalebox{\graphscale}{$\threelegs{$\epsilon$}{$\omega$}{$\epsilon$} $} or  \scalebox{\graphscale}{$\twoonetwolegs{$\omega$}{$\omega$}{$\omega$}{$\omega$}{$\omega$}$} vanish. Indeed, if we keep track of a choice of labelling on the connected component, we obtain
  \vskip\fsize
\begin{center}  
  \resizebox{0.5\hsize}{!}{
    $
    \begin{tikzpicture}
      \node[int] (i) at (0,.5) {};
      \node (v1) at (-.5,-.2) {$\epsilon$};
      \node (v2) at (0,-.2) {$\omega$};
      \node (v3) at (.5,-.2) {$\epsilon$};
      \node[font=\tiny,text=blue] (l1) at (0.1,0) {1};
      \node[font=\tiny, text=blue] (l2) at (0.6,0) {2};
      \node[font=\tiny, text=blue] (l3) at (-.4,0) {3};
      \node[font=\tiny,text=green] (l4) at (0.15,-0.35) {1};
    \draw (i) edge (v1) edge (v2) edge (v3);
    \end{tikzpicture}
    =
    \begin{tikzpicture}
      \node[int] (i) at (0,.5) {};
      \node (v1) at (-.5,-.2) {$\epsilon$};
      \node (v2) at (0,-.2) {$\omega$};
      \node (v3) at (.5,-.2) {$\epsilon$};
      \node[font=\tiny,text=blue] (l1) at (0.1,0) {1};
      \node[font=\tiny, text=blue] (l2) at (0.6,0) {3};
      \node[font=\tiny, text=blue] (l3) at (-.4,0) {2};
      \node[font=\tiny,text=green] (l4) at (0.15,-0.35) {1};
    \draw (i) edge (v1) edge (v2) edge (v3);
    \end{tikzpicture}
    =
    -
    \begin{tikzpicture}
      \node[int] (i) at (0,.5) {};
      \node (v1) at (-.5,-.2) {$\epsilon$};
      \node (v2) at (0,-.2) {$\omega$};
      \node (v3) at (.5,-.2) {$\epsilon$};
      \node[font=\tiny,text=blue] (l1) at (0.1,0) {1};
      \node[font=\tiny, text=blue] (l2) at (0.6,0) {2};
      \node[font=\tiny, text=blue] (l3) at (-.4,0) {3};
      \node[font=\tiny,text=green] (l4) at (0.15,-0.35) {1};
    \draw (i) edge (v1) edge (v2) edge (v3);
    \end{tikzpicture}
    ,$
  }
\end{center}
  using in the first equality the identification of isomorphic generators and in the second the identification of permuted ordering with sign. The same argument holds for the two other mentioned examples. More generally, graphs with such an ``odd'' symmetry vanish in $B_{g,n}$. Note however, that we do not allow the isomorphisms to permute the labelling of the legs, whence components such as \scalebox{\graphscale}{\threelegs{1}{$\omega$}{2}} are non-trivial in $B_{g,n}$.
\end{remark}

\subsubsection{The Chain Complex}
We define the degree of a graph generator to be given by the difference between the number of structural edges and the number of distinguished half-edges with some shift. We write this as
$${\deg (\Gamma) =  22 + |E| -n - w } ,$$
where $E$ is the set of all edges of $\Gamma$.

\vskip\fsize
The boundary operator of the graph complex is given by a combination of two differentials. We describe these as presented in \cite{PW23}.

The first part, denoted $\delta_\omega$, simply removes one distinguished half-edge from the orientation.
\[
    \delta_\omega (\Gamma, e_1\wedge\cdots \wedge e_k \wedge h_1\wedge \cdots \wedge h_w)
    =\sum_{j=1}^r (-1)^{k+j-1}
    (\Gamma, e_1\wedge \cdots \wedge e_k \wedge h_1 \wedge \cdots \hat h_j \cdots \wedge h_w)
\]
Pictorially:
\begin{align*}
    \delta_\omega:
    \begin{tikzpicture}
        \node[ext,accepting] (v) at (0,0){};
        \draw (v) edge[->-] +(0:.6) edge[->-] +(60:.6) edge +(120:.6) edge +(180:.6) edge +(240:.6) edge[->-] +(300:.6);
    \end{tikzpicture}
    &\mapsto
    \sum\pm
    \begin{tikzpicture}
        \node[ext,accepting] (v) at (0,0){};
        \draw (v) edge[->-] +(0:.6) edge +(60:.6) edge +(120:.6) edge +(180:.6) edge +(240:.6) edge[->-] +(300:.6);
    \end{tikzpicture} \quad .
\end{align*}
The piece $\delta_s$ acts by splitting vertices.
For convenience we shall further decompose $\delta_s=\delta_s^\bullet+\delta_s^\circ$ into a piece $\delta_s^\circ$ splitting the special vertex and $\delta_s^\bullet$ splitting the other vertices. The part $\delta_s^\bullet$ is defined by
\[
\delta_s^\bullet (\Gamma, e_1\wedge\cdots \wedge e_k \wedge h_1\wedge \cdots \wedge h_w)
=
\sum_{v\in V_\bullet\Gamma} \sum_{\text{split v}} (\Gamma', e_0\wedge e_1\wedge\cdots \wedge e_k \wedge h_1\wedge \cdots\wedge h_w)    \, ,
\]
where the outer sum is over all non-special vertices $v$ of $\Gamma$. The inner sum is over all admissible ways of replacing the vertex $v$ by two vertices connected by an edge $e_0$, distributing the incident half-edges at $v$ on the new vertices, thus forming a graph $\Gamma'$. Pictorially:
\begin{align*}
    \delta_s^\bullet:
    \begin{tikzpicture}[baseline=-.65ex]
        \node[int] (v) at (0,0) {};
        \draw (v) edge +(-.3,-.3)  edge +(-.3,0) edge +(-.3,.3) edge +(.3,-.3)  edge +(.3,0) edge +(.3,.3);
        \end{tikzpicture}
        &\mapsto\sum
        \begin{tikzpicture}[baseline=-.65ex]
        \node[int] (v) at (0,0) {};
        \node[int] (w) at (0.5,0) {};
        \draw (v) edge (w) (v) edge +(-.3,-.3)  edge +(-.3,0) edge +(-.3,.3)
         (w) edge +(.3,-.3)  edge +(.3,0) edge +(.3,.3);
        \end{tikzpicture}        \quad .
\end{align*}
Similarly, we define $\delta_s^\circ$ by
\[
\delta_s^\circ (\Gamma, e_1\wedge\cdots \wedge e_k \wedge h_1\wedge \cdots \wedge h_w)
=
\sum_{\underset{|B|\geq 2}{B\subset H_*}} (\text{split}_B\Gamma, e_0\wedge e_1\wedge\cdots \wedge e_k \wedge h_1\wedge \cdots \wedge h_w)   \, , 
\]
with the sum running over subsets $B$ of the set $H_*$ of half-edges incident at the special vertex, such that $|B|\geq 2$ and $B$ contains at most one of the distinguished half-edges.
The graph $\text{split}_B\Gamma$ is built by adding a new non-special vertex $v$ to the graph $\Gamma$, with an edge $e_0$ to the special vertex, and reconnecting the half-edges $B$ to $v$.
If $B$ contains a marked half-edge, then the marking is removed and put on $e_0$ instead. 
Pictorially:
\begin{align*}
    \delta_s^\circ:
\begin{tikzpicture}
        \node[ext,accepting] (v) at (0,0) {};
        \draw (v) edge[->-] +(-.3,-.3)  edge[->-] +(-.3,0) edge +(-.3,.3) edge[->-] +(.3,-.3)  edge +(.3,0) edge +(.3,.3);
        \end{tikzpicture}
        &\mapsto\sum
        \begin{tikzpicture}[baseline=-.65ex]
        \node[ext,accepting] (v) at (0,0) {};
        \node[int] (w) at (0.5,0) {};
        \draw (v) edge (w) (v) edge[->-] +(-.3,-.3)  edge[->-] +(-.3,0) edge +(-.3,.3) edge[->-] +(.3,-.3)
         (w)   edge +(.3,0) edge +(.3,.3);
        \end{tikzpicture}
        +\sum 
        \begin{tikzpicture}[baseline=-.65ex]
        \node[ext,accepting] (v) at (0,0) {};
        \node[int] (w) at (0.5,0) {};
        \draw (v) edge[->-] (w) (v) edge[->-] +(-.3,-.3)  edge[->-] +(-.3,0) edge +(-.3,.3)
            (w) edge +(.3,-.3)  edge +(.3,0) edge +(.3,.3);
        \end{tikzpicture} \quad .
\end{align*}

The sum $\d = \d_{\omega} + \d_{s}$ gives a well-defined coboundary operator on $B_{g,n}$ (\cite{PW23}, Lemma 3.4) defining our chain complex.

We conclude this paragraph by noting a useful property of $\d_s^\bullet$ whose proof is immediate.
\begin{proposition}\label{prop:delta_s-distr}
  $\delta_s^\bullet$ distributes over connected components as follows:
  $$\delta_s^\bullet(\bigcup_i C_i) = \sum_i (\delta_s^\bullet(C_i) \cup\bigcup_{j\neq i} C_j)$$
\end{proposition}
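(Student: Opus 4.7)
The plan is to unfold the definition of $\delta_s^\bullet$ and observe that, since each non-special vertex lies in a unique connected component of the blown-up graph, splitting such a vertex only alters the component to which it belongs, while the other components are carried along unchanged. This will identify the two sides term-by-term.

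First, I would write $\Gamma = \bigcup_i C_i$ as a disjoint union of blown-up components, so that the set of non-special vertices decomposes as $V_\bullet \Gamma = \bigsqcup_i V_\bullet C_i$. Plugging into the definition gives
$$\delta_s^\bullet(\Gamma, o) = \sum_i \sum_{v \in V_\bullet C_i} \sum_{\text{split } v} (\Gamma', e_0 \wedge o).$$
Next, I would check that for $v \in V_\bullet C_i$, any admissible splitting only modifies $C_i$: the new edge $e_0$ connects two new vertices that together replace $v$, and the half-edges being redistributed are exactly the half-edges incident at $v$, all of which belong to $C_i$. None of the remaining components $C_j$ for $j \neq i$ share any half-edge with $v$, so they appear unaltered. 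Thus $\Gamma' = (\text{split}_v C_i) \cup \bigcup_{j \neq i} C_j$, and the inner double sum reproduces precisely $\delta_s^\bullet(C_i)$ glued to the untouched components.

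Finally, I would verify that the orientation data matches: $\delta_s^\bullet$ prepends the new edge $e_0$ to the ordering of structural edges and does not touch the distinguished half-edges, so the sign conventions on both sides of the claimed identity agree. The only real bookkeeping point — and the mildest obstacle — is ensuring that the labels on the legs in $C_i$ and the markings on the half-edges behave correctly under the splitting; this is immediate because $\delta_s^\bullet$ never permutes leg labels and never moves $\omega$-markings between half-edges (in contrast to $\delta_s^\circ$).
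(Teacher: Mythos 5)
Your argument is correct and coincides with the paper's treatment: the paper simply declares the proof immediate, and what you have written out (decomposing $V_\bullet\Gamma$ over the components, noting that splitting a vertex of $C_i$ leaves the other components and the leg labels and $\omega$-markings untouched, and checking that the prepended edge $e_0$ makes the orientations match) is exactly the expansion of that one-line claim. Nothing is missing.
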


\subsubsection{Induced Modules and Young Representations}

To obtain the $\Q$-module generated by a given $\Gamma$ in $B_{g,n}$, one must consider all possible ways of labelling the $n$ legs and the identifications under permutations and isomorphism. This yields a representation of $S^n$ coming from a representation of $S_n$. These are well-known and can be represented using Young diagrams.

As an example, we consider the generators of the form 
$$\Gamma = \oleg{1} \ldots \oleg{11}.$$
Any permutation of two label induces a sign coming from the permutation of the associated distinguished half-edges. Hence, $\Gamma$ generates the sign-representation module $V_{1^{11}}$. 
For a more complex example, one can consider the graphs of the form 
\begin{center}
  \resizebox{0.6\textwidth}{!}{$\Gamma= \oleg{1} \ldots \oleg{10} \tripleo \twotwolegs{11}{$\omega$}{12}{13}$}
\end{center}
These have ten labels that permute with a sign, two that permute without sign and one isolated label that cannot commute without yielding a different graph. The induced representation module can then be computed using Pieri's Rule and is given by
\begin{align*}
   & \rm{ind}_{S_{12} \times S_1} \left( \rm{ind}_{s_{10} \times S_2} \left(V_{1^{10}} \otimes V_2\right) \otimes V_1\right)   
    \\
    &= V_{41^{n-4}} \oplus V_{321^{n-5}} \oplus V_{31^{n-3}} \oplus V_{31^{n-3}} \oplus V_{2^21^{n-4}}  \oplus V_{21^{n-2}}.
\end{align*}
In terms of Young diagrams, this is represented by the following equality:
\begin{center}
  \resizebox{0.9\hsize}{!}{
$\left( \ydiagram{1,1,1,1,1,1,1,1,1,1} \otimes \ydiagram{2}\right) \otimes \ydiagram{1} = \left( \ydiagram{3,1,1,1,1,1,1,1,1,1} \oplus \ydiagram{2,1,1,1,1,1,1,1,1,1,1}\right) \otimes \ydiagram{1}  
=\ydiagram{4,1,1,1,1,1,1,1,1,1} \oplus \ydiagram{3,2,1,1,1,1,1,1,1,1} \oplus \ydiagram{3,1,1,1,1,1,1,1,1,1,1}
\oplus \ydiagram{3,1,1,1,1,1,1,1,1,1,1} \oplus \ydiagram{2,2,1,1,1,1,1,1,1,1,1} \oplus \ydiagram{2,1,1,1,1,1,1,1,1,1,1,1}
$
}
\end{center}
For more details about the underlying theory, we refer the curious reader to the classic book of Fulton and Harris \cite{FH04}

There is another nice consequence of dealing with representation modules spelled out in terms of their decomposition in irreducible representations. By Schur's lemma, the restriction of the differential to an irreducible module (in our case, the module $V_{\lambda}$) is either trivial or an isomorphism onto its image. This simplifies cancellations in the graph complex a lot.

\subsection{Excess}\label{subsec:Excess}

Let us briefly introduce the notion of excess as defined in \cite{PW23}, and motivate its role to describe a notion of complexity for the graph complexes $B_{g,n}$.

\begin{definition}
  For a generating graph $\Gamma$ of the complex $B_{g,n}$ with $w$ distinguished half-edges, we define its \emph{excess} to be the integer
  $$E(\Gamma) = 3 (g-1) + 2n - 2w.$$ 
\end{definition}

In order to work with the blown-up connected components of such graphs, we can define a notion of excess for blown-up parts as well. Given a blown-up component $C_i$ of a graph in $B_{g,n}$, we can define the excess of the component as
$$ E(C_i) = 3 h_1(C_i) + 3e_i + 2 n_i + w_i -3 ,$$
where we decomposed the genus $g_i$ of the component into the sum ${g_i = h^1(C_i) + e_i + w_i}$ of the loop order of $C_i$, the number $e_i$ of non-distinguished and $w_i$ of distinguished half-edges incident to the special vertex.

Note that these definitions give additivity of excess over blown-up components. This means that if $\Gamma = \bigcup_i C_i$ is a graph in $B_{g,n}$ with blown-up connected components $\{C_i\}_i$, then
$$E \left(\bigcup_i C_i\right) = \sum_i E (C_i). $$

We use excess to measure a certain degree of complexity in our graph complexes. To understand this, note that the bound $E(\Gamma) \leq {3g+2n-25}$ is trivially satisfied by any $\Gamma \in B_{g,n}$. As a consequence, a fixed graph complex imposes an upper bound onto its generating graphs.

\begin{definition}
  The \emph{excess} of the graph complex $B_{g,n}$ is given by $E(g,n) := 3g+2n-25$.
\end{definition}

With this nomenclature, we can reformulate our observation above by saying that all generators $\Gamma$ of $B_{g,n}$ have excess of at most the excess of $B_{g,n}$.

For graph complexes of excess four, the generators must then be rather simple in the low excess case as shown by the following result:

\begin{proposition}[\cite{PW23}, 4.4.]
  If $h^1(C_i) \geq 1$, then ${E(C_i) \geq 5}$.
\end{proposition}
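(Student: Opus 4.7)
The approach is to combine a direct estimate coming from the valence conditions at non-special vertices with a vanishing argument that eliminates the residual low-excess configurations via odd automorphisms. First I would unfold the target inequality: $E(C_i) \geq 5$ is equivalent to $3 h^1(C_i) + 3 e_i + 2 n_i + w_i \geq 8$. Since every non-special vertex has valency at least $3$ and $C_i$ carries no tadpoles, double-counting half-edges at non-special vertices yields
\[
3 |V(C_i)| \;\leq\; 2 |E_{\mathrm{int}}(C_i)| + e_i + w_i + n_i \;=\; 2\bigl(h^1(C_i) + |V(C_i)| - 1\bigr) + e_i + w_i + n_i,
\]
whence $|V(C_i)| + 2 \leq 2 h^1(C_i) + e_i + w_i + n_i$. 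Connectivity of $\Gamma$ forces $e_i + w_i \geq 1$.

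For $h^1(C_i) \geq 3$, the contribution $3 h^1(C_i) \geq 9$ alone exceeds $8$, so the bound is immediate from $E(C_i) \geq 9 - 3 = 6$. For $h^1(C_i) = 2$, I would first argue that any component with $|V(C_i)| \leq 3$ must contain two parallel edges between the same pair of non-special vertices, and the swap of those two edges is an odd automorphism (flipping the structural edge ordering by $-1$ while fixing everything else), hence such a component vanishes in $B_{g,n}$ by the identification described in Remark~\ref{rmk:symmetry}. Thus $|V(C_i)| \geq 4$ up to vanishing, and the valence estimate gives $e_i + w_i + n_i \geq 2$; minimising $3 e_i + 2 n_i + w_i$ under this constraint yields at least $2$, so $E(C_i) \geq 6 + 2 - 3 = 5$.

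The main work is the case $h^1(C_i) = 1$. The unique cycle has length $k$, and the same parallel-edge vanishing forces $k \geq 3$. Each cycle vertex has degree $2$ inside the cycle, so contributes at least one external half-edge; a similar analysis of the tree part attached to the cycle pushes the total number of external half-edges to at least $k + V_t$, where $V_t$ counts non-cycle vertices. For $k \geq 5$ one directly gets $E(C_i) \geq k - 3 + 3 = k \geq 5$, and larger tree parts or the presence of $\epsilon$ or labelled half-edges likewise push $E(C_i)$ above the threshold through the coefficients $3 e_i$ and $2 n_i$. What remains is a short list of exceptional configurations with $k \in \{3,4\}$ and few or no $\epsilon$/label half-edges, namely the triangle with three distinguished half-edges, the triangle with two $\omega$'s and one label, the triangle with four $\omega$'s, and the square with four $\omega$'s. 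For each I would exhibit a concrete odd automorphism of the decorated component: the dihedral reflections of the triangle act by swapping two structural cycle-edges, two edges to the special vertex, and two distinguished half-edges (signature $(-1)^3 = -1$), while the order-four rotation of the square does the same via three $4$-cycles (signature $(-1)^3 = -1$). In each case the generator is identified with its negative and so is zero.

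The main obstacle is organising the $h^1(C_i) = 1$ analysis compactly: one must enumerate every distribution of external half-edges across the cycle and its attached tree, verify that the residual dihedral symmetries really are automorphisms of the \emph{decorated} component, and correctly track the combined sign on the structural edge ordering and the distinguished half-edge ordering. Once this bookkeeping is in place, the vanishing argument and the valence estimate together yield $E(C_i) \geq 5$ in all remaining cases.
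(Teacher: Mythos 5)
First, a remark on the comparison itself: the paper does not prove this statement — it is quoted from \cite{PW23} (Lemma 4.4) without proof — so your argument can only be judged on its own merits. Your overall strategy is the natural and correct one: the double count $|V(C_i)|+2\le 2h^1(C_i)+e_i+w_i+n_i$ coming from trivalence, the immediate disposal of $h^1\ge 3$, the parallel-edge vanishing that forces $|V(C_i)|\ge 4$ when $h^1=2$, and the sign bookkeeping for the residual $h^1=1$ configurations are all sound. In particular you track the signs correctly: a swap of two $\omega$-legs contributes one transposition of structural edges \emph{and} one transposition of distinguished half-edges, so together with the transposition of cycle edges the dihedral reflection has signature $(-1)^3=-1$, and the four configurations you list do vanish.

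There is, however, a genuine hole in the $h^1(C_i)=1$ enumeration: the assertion that ``larger tree parts \dots push $E(C_i)$ above the threshold'' fails when the tree part consists of a single vertex. Take the cycle to be a triangle and attach one extra trivalent vertex to a cycle vertex by a single edge. The valence count gives at least $k+V_t=4$ external half-edges, and this minimum is achieved with all four distinguished: the pendant vertex carries two $\omega$-half-edges and each of the two cycle vertices not meeting the pendant carries one, while the attachment vertex (valence $3$ already from two cycle edges and the pendant edge) carries none. This component satisfies every admissibility constraint, has $e_i=n_i=0$, $w_i=4$, and hence $E(C_i)=3\cdot 1+4-3=4<5$; it is not on your list of exceptional cases. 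It does vanish — the reflection of the triangle fixing the attachment vertex swaps two cycle edges, two $\omega$-legs, and two distinguished half-edges, signature $(-1)^3=-1$ — so the proposition is unharmed, but as written your case analysis does not cover it. A clean way to close the gap: with $e_i=0$ forced, one needs $n_i+w_i\le 2n_i+w_i\le 4$ while the valence count gives $n_i+w_i\ge k+V_t\ge 3$; this leaves exactly five configurations, namely your four together with the triangle-with-pendant, each killed by a reflection or by the order-four rotation.
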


Payne and Willwacher computed the cohomology groups of the graph complexes with excess less or equal to three. In the following, we will compute the cohomology in the last remaining case with trivial loop order, excess four. While our approach is mostly computational, we also give additional details about the process and the appearing graphs.

\section{Excess Four Generators}\label{sec:Generators}

In this section, we determine all the generating graphs that play into the cohomology of graph complexes $B_{g,n}$ with excess four. For the remaining of this section, we fix two integers $g,n \geq 0$ with $E(g,n) =4$ and drop the $g,n$ subscript when it eases notation.

As described in Section \ref{sec:Background}, the generators of $B=B_{g,n}$ can be obtained from combining connected components of suitable excess. As excess is non-negative and additive over connected components, it will be sufficient to determine all connected components of excess at most four. As we shall see in Section \ref{subsec:reduction-argt}, we will be able to simplify our computations, by restricting our attention to certain connected components only, which we will call  call \emph{essential} (see Definition \ref{def:essential}). In Section \ref{subsec:genconccomp}, we will then determine all the essential connected components of excess up to four before combining them in all possible ways to yield a generator of our complexes in Section \ref{subsec:GenGraphs}.

  \subsection{The Reduction Argument}\label{subsec:reduction-argt}

  While all Excess four blown-up connected components can be obtained by listing all admissible instances of component $C$ satisfying the equation
  \begin{equation}\label{eq:E-component}
    E(C) = 3 h_i(C) + 3 e +2n + w -3 = 4,
  \end{equation}
  we can simplify computations using the following observation.

  \begin{example}\label{eg:eg-nonnec}
    Consider the connected components of excess four given by the solution of \ref{eq:E-component} given by $e=1$, $n=0$ and $w=4$, which can be represented as follows:

  \begin{figure*}[ht]
    \begin{center}
    \begin{tabular}{c c c c c}
      \scalebox{\graphscale}{\fivelegs{$\omega$}{$\omega$}{$\epsilon$}{$\omega$}{$\omega$}} &
      \scalebox{\graphscale}{\twothreelegs{$\omega$}{$\omega$}{$\epsilon$}{$\omega$}{$\omega$}}  &
      \scalebox{\graphscale}{\twothreelegs{$\omega$}{$\epsilon$}{$\omega$}{$\omega$}{$\omega$}} &
      \scalebox{\graphscale}{\twoonetwolegs{$\omega$}{$\epsilon$}{$\omega$}{$\omega$}{$\omega$}}  &
      \scalebox{\graphscale}{\twoonetwolegs{$\omega$}{$\omega$}{$\epsilon$}{$\omega$}{$\omega$}}.
    \end{tabular}
  \end{center}
  \end{figure*}
  
  Note that the last component vanishes in our graph complex because of odd symmetry. The other generators are related via the boundary operator as follows:

  \begin{align*}
    \delta_s^\bullet(\scalebox{\graphscale}{\fivelegs{$\omega$}{$\omega$}{$\epsilon$}{$\omega$}{$\omega$}}) &= 6\cdot \scalebox{\graphscale}{\twothreelegs{$\omega$}{$\omega$}{$\epsilon$}{$\omega$}{$\omega$}} + 4 \cdot \scalebox{\graphscale}{\twothreelegs{$\omega$}{$\epsilon$}{$\omega$}{$\omega$}{$\omega$}} \\
    \delta_s^\bullet(\scalebox{\graphscale}{\twothreelegs{$\omega$}{$\omega$}{$\epsilon$}{$\omega$}{$\omega$}}) &= 2 \cdot \scalebox{\graphscale}{\twoonetwolegs{$\omega$}{$\epsilon$}{$\omega$}{$\omega$}{$\omega$}}   \\
    \delta_s^\bullet(\scalebox{\graphscale}{\twothreelegs{$\omega$}{$\epsilon$}{$\omega$}{$\omega$}{$\omega$}}) &= 3 \cdot \scalebox{\graphscale}{\twoonetwolegs{$\omega$}{$\epsilon$}{$\omega$}{$\omega$}{$\omega$}} \\
    \delta_s^\bullet(\scalebox{\graphscale}{\twoonetwolegs{$\omega$}{$\epsilon$}{$\omega$}{$\omega$}{$\omega$}}) &=0
  \end{align*}

  As we will see, these relations make the impact of such components cancel out in homology, whence we will omit them in our computations.
  \end{example}

  We now give the reduction argument.

  Let $\cS$ be the set of excess four connected components with $w \geq 4$. Additionally to the connected components considered in Example \ref{eg:eg-nonnec}, this amounts to components with $w=5$, $n=1$ and to those with $w=7$.

  \begin{definition}
    We denote by $K=K_{g,n}$ the subset $K \subseteq B$ of graphs, which have a component from $\cS$ in their blown-up representation.
  \end{definition}

  \begin{remark}\label{rmk:decompK}
    Recall that excess is additive over connected components and that $E(\Gamma) \leq E(g,n) = 4$ holds for all $\Gamma \in B$. Therefore, and since all components in $\cS$ have excess four, any graph $\Gamma \in K$ can be written as 
    $$\Gamma = \Gamma_0 \cup C_{\Gamma}$$
    in blown-up representation, where $C_\Gamma \in \cS$ and $\Gamma_0$ is a union of excess zero connected components.
  \end{remark}

  \begin{proposition}
    $(K,\delta)$ forms a subcomplex of $(B,\delta)$
  \end{proposition}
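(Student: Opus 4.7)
The plan is to verify $\delta(K) \subseteq K$ by decomposing $\delta = \delta_\omega + \delta_s^\bullet + \delta_s^\circ$ and checking each piece on a generic $\Gamma \in K$. I would begin from the observation that, by Remark~\ref{rmk:decompK}, any such $\Gamma$ splits as $\Gamma_0 \cup C_\Gamma$ with $E(C_\Gamma)=4$ and $E(\Gamma_0)=0$. Additivity of excess then forces $E(\Gamma)=4=E(g,n)$, so $\Gamma$ realises the maximal excess available in $B_{g,n}$; equivalently, it carries exactly $w(\Gamma)=11$ distinguished half-edges. This single observation already disposes of $\delta_\omega$: because $\delta_\omega$ decreases the count of distinguished half-edges by one, while $B_{g,n}$ is taken modulo the subcomplex of graphs with $w<11$, every summand of $\delta_\omega(\Gamma)$ vanishes in $B_{g,n}$ and so lies trivially in $K$.

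For $\delta_s^\bullet$ I would invoke Proposition~\ref{prop:delta_s-distr}. Since each $\oleg{i_j}$ in $\Gamma_0$ contains no non-special vertex to split, $\delta_s^\bullet(\oleg{i_j})=0$, and hence $\delta_s^\bullet(\Gamma) = \Gamma_0 \cup \delta_s^\bullet(C_\Gamma)$. A routine check confirms that splitting a non-special vertex of $C_\Gamma$ keeps the component connected through the new edge and preserves loop order, leg count, and the numbers of distinguished and non-distinguished half-edges at the special vertex. Hence the excess and $w$ of each summand remain at $4$ and $\geq 4$ respectively, every piece stays in $\cS$, and $\delta_s^\bullet(\Gamma)\in K$.

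The bulk of the argument is $\delta_s^\circ$, which can merge components and so demands a short case analysis on $B\subseteq H_*$. The key observation is that every half-edge of $\Gamma_0$ at the special vertex is marked, so the constraints $|B|\geq 2$ and ``at most one marked half-edge'' force $B$ to contain at least one non-distinguished half-edge belonging to $C_\Gamma$. Two possibilities then remain: either $B\subseteq H(C_\Gamma)$ entirely, so that $\Gamma_0$ is untouched and only $C_\Gamma$ is modified into a new component $C'_\Gamma$, or $B$ contains exactly one half-edge from some $\oleg{i_j}$ (necessarily the marked one), in which case this leg is absorbed into a combined component $C'_\Gamma$ while the remaining legs form a smaller $\Gamma'_0$. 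In both scenarios I would verify $C'_\Gamma \in \cS$ by two bookkeeping arguments: the total $w$ of the graph is preserved by $\delta_s^\circ$, since any marking lost from a half-edge of $B$ reappears on $e_0$, so by excess additivity together with the fact that the surviving $\oleg$'s still have excess zero one obtains $E(C'_\Gamma)=4$; and an explicit count of distinguished half-edges at the special vertex gives $w(C'_\Gamma) \geq w(C_\Gamma) \geq 4$.

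The only delicate step is this marked-count bookkeeping for $\delta_s^\circ$ in the case where a leg is absorbed: one must track carefully how the mark originally carried by the $\oleg{i_j}$-half-edge is transferred onto $e_0$, which is precisely what keeps the total $w$ constant and ensures that the new combined component remains in $\cS$ rather than falling out of it. Once the three pieces are assembled, $\delta(K)\subseteq K$ follows and $K$ is a subcomplex of $(B,\delta)$.
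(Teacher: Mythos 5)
Your proof is correct and follows essentially the same route as the paper: decompose $\delta$ into $\delta_\omega+\delta_s^\bullet+\delta_s^\circ$, kill $\delta_\omega$ by noting that graphs in $K$ already realise the maximal excess (equivalently $w=11$), observe that $\delta_s^\bullet$ preserves $\cS$-components by construction, and reduce the $\delta_s^\circ$ analysis to the fact that excess-zero components carry only distinguished half-edges at the special vertex. If anything, your case analysis for $\delta_s^\circ$ is slightly more thorough than the paper's, since you also treat the case $B\subseteq H(C_\Gamma)$ in which $C_\Gamma$ is modified internally rather than merged with a leg.
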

  \begin{proof}
    We check that $\delta(K) \subseteq K$. Clearly, $\delta_s^{\bullet}(K)\subseteq K$ by construction and $\delta_\omega$ is trivial as it increases excess by two. Hence, we only need to verify that $\delta_s^{\circ}(K) \subseteq K$, i.e. that $\delta_s^{\circ}$ preserves the $C_\Gamma \in \cS$ component of any $\Gamma \in K$. 
    
    The only way of altering a $C_\Gamma$ is to combine it with one or more excess zero components. However, the latter do not have any unmarked legs. Hence, by its definition, $\delta_s^{\circ}$ can only combine a component $C_\Gamma \in K$ with $w=4$ together with an excess zero component, thus yielding one of the two other types of components in $K$, namely $w=5$ or $w=7$. This shows that $\delta_s^{\circ}$ preserves $K$ and completes the proof.
  \end{proof}

  The graphs in $K$ do not play into the cohomology of $B$ as shows the following result.

  \begin{proposition}\label{prop:notessential}
    Let $\hat B_{g,n} = B_{g,n} /K_{g,n}$. Then the quotient map $\pi \colon B_{g,n} \to \hat B_{g,n}$ induces a isomorphism on cohomology.
  \end{proposition}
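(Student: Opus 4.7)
The plan is to deduce the quasi-isomorphism from the acyclicity of the subcomplex $K = K_{g,n}$. Indeed, the short exact sequence $0 \to K \to B_{g,n} \to \hat B_{g,n} \to 0$ gives rise to a long exact sequence in cohomology, so once one knows $H^*(K) = 0$ the map $\pi$ automatically induces an isomorphism. The task thus reduces to proving that $K$ is acyclic.

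First I would simplify the differential on $K$. Every generator of $K$ has excess exactly $E(g,n) = 4$, the maximum allowed in $B_{g,n}$, and $\delta_\omega$ strictly raises excess by two, so it vanishes identically on $K$. Hence the coboundary on $K$ is just $\delta_s^\bullet + \delta_s^\circ$. By Remark~\ref{rmk:decompK}, every $\Gamma \in K$ decomposes uniquely as $\Gamma_0 \cup C_\Gamma$ with $C_\Gamma \in \cS$ and $\Gamma_0$ a union of excess-zero components. I would enumerate $\cS$ using the constraint $3e + 2n + w = 7$ with $h^1 = 0$, which gives three families indexed by $(e,n,w) \in \{(1,0,4),(0,1,5),(0,0,7)\}$, and list all tree topologies in each family modulo the odd-symmetry vanishing of Remark~\ref{rmk:symmetry}.

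Next I would set up a filtration $F_\bullet K$ indexed by a statistic of $C_\Gamma$, for instance its number of internal vertices, chosen so that $\delta_s^\bullet$ strictly raises the filtration index while $\delta_s^\circ$ preserves or shifts it in a controlled way. The associated spectral sequence has $E_1$-page equal to the cohomology of the filtration-preserving part of the differential on the associated graded. The explicit relations exhibited in Example~\ref{eg:eg-nonnec} for the $(1,0,4)$ family already display the key cancellation pattern: combined with the $\delta_s^\circ$-contributions that link adjacent filtration layers, they produce a contractible complex in each irreducible $S_n$-isotypic component. Analogous calculations should dispatch the $(0,1,5)$ and $(0,0,7)$ families. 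By Schur's lemma, acyclicity on each isotypic component then reduces to a finite rank check, yielding $E_1 = 0$ and hence $H^*(K) = 0$.

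The main obstacle I anticipate is the combinatorial and representation-theoretic bookkeeping required in this last step: one must enumerate the tree types in $\cS$ together with their automorphism groups, track the signs coming from the orderings of structural edges and distinguished half-edges, and verify $S_n$-equivariance of the cancellations --- especially across the $\Gamma_0 / C_\Gamma$ boundary, where $\delta_s^\circ$ can absorb an excess-zero $\omega$-leg into the $\cS$-component and thereby change its type from $(1,0,4)$ to $(0,1,5)$ or $(0,0,7)$. Once this bookkeeping is verified, the spectral sequence collapses, the long exact sequence yields the isomorphism, and the proposition follows.
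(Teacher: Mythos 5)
Your overall strategy is sound and lands very close to the paper's: both arguments ultimately rest on the cancellation pattern of Example \ref{eg:eg-nonnec} inside $\cS$, and your reduction via the long exact sequence of $0 \to K \to B_{g,n} \to \hat B_{g,n} \to 0$ to the single statement $H^\bullet(K)=0$ is a legitimate, arguably cleaner, packaging than the paper's, which instead runs that long exact sequence on the associated graded pieces of a filtration of $B$ and $\hat B$ and then invokes the comparison theorem for spectral sequences. Your observations that $\delta_\omega$ vanishes on $K$ because its generators already have maximal excess, and that $\cS$ is enumerated by $3e+2n+w=7$ with $w\geq 4$, i.e. $(e,n,w)\in\{(1,0,4),(0,1,5),(0,0,7)\}$, both agree with the paper.

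The one step that does not work as written is your choice of filtration on $K$. Filtering by the number of internal vertices of $C_\Gamma$ does not separate $\delta_s^\bullet$ from $\delta_s^\circ$: both raise that statistic by exactly one ($\delta_s^\bullet$ by splitting a vertex of $C_\Gamma$ --- note it acts trivially on $\Gamma_0$, whose components admit no legal splits --- and $\delta_s^\circ$ by creating the new vertex that absorbs the $\epsilon$-leg of a type-$(1,0,4)$ component together with an $\omega$-leg of $\Gamma_0$). Hence the filtration-preserving part of the differential is zero, $E_1=E_0$, and you are left proving acyclicity of all of $(K,\delta_s^\bullet+\delta_s^\circ)$ by hand, cross-family terms included; the filtration has bought you nothing. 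The repair is the filtration the paper actually uses, namely by the number of blown-up connected components. On the associated graded, $\delta_s^\circ$ dies: any $B$ drawn from a single component either contains two distinguished half-edges (not allowed) or produces a component with $h^1\geq 1$ and excess four, which vanishes by the cited result of Payne--Willwacher that nonzero components with positive loop order have excess at least five, while any other $B$ strictly merges components and so drops the filtration degree. What survives is $\delta_s^\bullet$ alone, the complex splits as a direct sum over the choices of $\Gamma_0$ and over the three families of $\cS$, and each summand is one of the short explicit complexes whose acyclicity Example \ref{eg:eg-nonnec} (together with its analogues for $(0,1,5)$ and $(0,0,7)$) exhibits. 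With that substitution your argument closes, and the remaining bookkeeping you flag is exactly the finite check the paper also leaves implicit.
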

  \begin{proof}
    
  We filtrate $B$ by the number of connected components. Formally, we denote $ (B^k)^d$ the degree $d$ graphs in $B$ with at most $k$ connected components in their blown-up representation and consider the filtration
  $$ (B^1)^\bullet \subseteq(B^2)^\bullet \subseteq(B^3)^\bullet \subseteq(B^4)^\bullet \subseteq \cdots$$
  This filtration induces a cohomology spectral sequence by setting $E_{k,q}^0 = (B^k)^{k+q} /(B^{k-1})^{k+q}$ and $E_{k,q}^1 = H^{k+q}(E_{k\bullet}^0)$ for all $k,q \geq 0$ (see for example \cite{Wei94} theorem 5.4.1).

  Analogously, we can filtrate the quotient $\hat B$ and obtain a spectral sequence $\hat E_{k,q}^r$

  We now consider the quotient chain map
  $$ \pi \colon B \to \hat{B} , $$
  and the induced morphism of spectral sequence
  $$ \pi_{k,q}^r \colon E_{k,q}^r \to \hat{E}_{k,q}^r.$$

  \begin{claim}\label{clm:pi-iso}
    $\pi_{k,q}^1$ defines an isomorphism for all $k,q$.
  \end{claim}
  \begin{proof}[Proof of Claim \ref{clm:pi-iso}]
    Consider the short exact sequence on the chain level given by
  $$0 \to K^\bullet \overset{\iota}{\longrightarrow} B^{\bullet} \overset{\pi}{\longrightarrow} \hat B^\bullet \to 0 $$
  inducing on the filtration the short exact sequence
  $$0 \to (K^k)^\bullet / (K^{k-1})^\bullet   (B^k)^\bullet / (B^{k-1})^\bullet  \to (\hat B^k)^\bullet / (\hat B^{k-1})^\bullet  \to 0.$$
  This induces on the cohomology level a long exact sequence of the form
  $$ \cdots \to \hat E_{k,q}^1 \to E_{k,q}^1 \to H^{k+q}(K^k / K^{k-1}) \to \hat E_{k,q+1} \to \cdots $$
  for all $k$. Using this exact sequence, it is now sufficient to verify that $H^{d}(K^k / K^{k-1}) = 0$ for all $k, q \geq 0$ to prove our claim, which we shall now prove.

  Fix $k\geq 0$. As in Remark \ref{rmk:decompK}, we note that any $\Gamma \in K^k/K^{k-1}$ can be written as $\Gamma = \Gamma_0 \cup \C_{\Gamma}$ in blown-up decomposition, where $C_\Gamma \in \cS$ and $\Gamma_0$ is a union of $k-1$ excess zero connected components. Note that $\delta_\omega$ and $\delta_s^{\circ}$ are trivial because of a simple excess argument and the grading respectively. We therefore only need to consider the $\delta_s^{\bullet}$ part of the coboundary operator in this computation. Finally, we note that we can compute the cohomology of $K^k / K^{k-1}$ via a spectral sequence comparing on the first pages only graphs with same $\Gamma_0$. Then the cancellation within $\cS$ leads to all homology groups being trivial on the first page of the spectral sequence already, whence $H^{d}(K^k / K^{k-1}) = 0$ for all $q \geq 0$.
  \end{proof}

  Using the comparison theorem for spectral sequences (see for instance \cite{Wei94}, theorem 5.2.12), since $\pi_{k,q}^1$ is an isomorphism for every $k,q$, we obtain that $\pi$ induces an isomorphism of cohomology
  $$\pi^* \colon H^\bullet(\hat B) \overset{\cong}{\longrightarrow} H^\bullet( B)$$
  as desired.
  \end{proof}

  \begin{definition}\label{def:essential}
    We will say that the connected components listed in Proposition \ref{prop:notessential} are not \emph{essential} to the cohomology of $B_{g,n}$.
  \end{definition}

  In order to simplify the computations, we will therefore restrain our attention to essential connected components and the graphs built out of these. As an abuse of notation, we shall write $B_{g,n}$ instead of $\hat B_{g,n}$ when no ambiguity can arise.

  \subsection{Generating Connected Components} \label{subsec:genconccomp}

  As the main result of this section, we determine the connected components of excess four that will be necessary for the generators of the cohomology of $B_{g,n}$
  \begin{proposition}\label{prop:concgen4}
    The essential generating components of excess four are the components of the following form:
  
    \begin{center}
      \fourlegs{i}{$\omega$}{$\omega$}{$\epsilon$}
      \quad
      \twotwolegs{$i$}{$\omega$}{$\omega$}{$\epsilon$}
      \quad
      \twoonetwolegs{i}{$\omega$}{$\omega$}{$\omega$}{j}
      \quad
      \fourlegs{i}{$\omega$}{j}{k}
      \\
      \twotwolegs{i}{$\epsilon$}{$\omega$}{$\omega$}
      \quad
      \threelegs{$\epsilon$}{i}{j}
      \quad
      \twotwolegs{i}{$\omega$}{j}{k}
    \end{center}
    \end{proposition}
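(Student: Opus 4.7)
The plan is to enumerate all excess-four trees exhaustively and discard those that are either non-essential or vanish by symmetry. First, I would invoke Proposition~4.4 of \cite{PW23}: any component with $h_1(C) \geq 1$ has excess at least $5$, so every excess-four component must be a tree, i.e.\ $h_1(C)=0$. Substituting this into the definition $E(C) = 3h_1(C) + 3e + 2n + w - 3 = 4$ yields $3e + 2n + w = 7$. The essentiality condition from Section~\ref{subsec:reduction-argt} further imposes $w \leq 3$, reducing the candidate triples $(e,n,w)$ to the five cases
$$ (0,2,3),\quad (1,1,2),\quad (0,3,1),\quad (2,0,1),\quad (1,2,0). $$

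Next, for each triple I would list all admissible tree topologies. Since internal vertices must have valence at least three, a tree with $L=e+n+w$ leaves admits at most $L-2$ internal vertices. As $L \leq 5$ in every case, the only topologies arising are the single-vertex star, the two-vertex ``dumbbell'', and the linear chain of three vertices. For each fixed topology, I would then distribute the indistinguishable $\omega$- and $\epsilon$-half-edges and the distinct legs among the internal vertices, modulo the reflection symmetry of the symmetric topologies.

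Each candidate is then tested against the odd-symmetry criterion of Remark~\ref{rmk:symmetry}. The crucial sign calculation is the following: swapping two $\omega$-half-edges incident to a common vertex is a graph automorphism that transposes two structural edges \emph{and} two distinguished half-edges in the orientation tuple, contributing a net sign $(-1)(-1)=+1$; swapping two $\epsilon$-half-edges incident to a common vertex, on the other hand, transposes only the two associated structural edges and contributes $-1$. Consequently, any component with two $\epsilon$s meeting at a single vertex vanishes, which immediately disposes of the case $(2,0,1)$ in its entirety (the unique topology there being a single vertex with legs $\epsilon,\omega,\epsilon$). Analogous sign computations handle the global reflection automorphisms of symmetric dumbbells and chains, and show that every remaining candidate that does not appear in the stated list is killed by one of these symmetries.

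The main obstacle is the combinatorial bookkeeping: enumerating every topology and every leaf distribution without duplication, and applying the sign rule correctly in each sub-case. The argument is essentially routine but lengthy; the cleanest way to organise it is a table indexed by $(e,n,w)$ and by topology, from which the seven stated components drop out as the non-vanishing essential survivors.
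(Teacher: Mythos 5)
Your setup is sound and essentially matches the paper's: invoking Lemma~4.4 of \cite{PW23} to force $h_1(C)=0$, reducing \eqref{eq:E-component} to $3e+2n+w=7$, imposing $w\leq 3$ from the reduction argument, and arriving at the five triples $(0,2,3),(1,1,2),(0,3,1),(2,0,1),(1,2,0)$ are all correct, as is your sign rule (two $\omega$-legs at a common vertex give $+1$, two $\epsilon$-legs give $-1$), which correctly kills the $(2,0,1)$ case. The gap is in your final claim that \emph{every} remaining candidate not in the stated list is killed by an odd symmetry. This fails for the triple $(e,n,w)=(0,2,3)$, with leg multiset $\{i,j,\omega,\omega,\omega\}$: the star \scalebox{\graphscale}{\fivelegs{i}{j}{$\omega$}{$\omega$}{$\omega$}}, the dumbbells such as \scalebox{\graphscale}{\twothreelegs{i}{j}{$\omega$}{$\omega$}{$\omega$}} and \scalebox{\graphscale}{\twothreelegs{i}{$\omega$}{j}{$\omega$}{$\omega$}}, and the chains with middle leg $i$ or $j$ or with $\{i,j\}$ on one end all have automorphism groups that only permute $\omega$-legs incident to a common vertex, hence act with sign $+1$ by your own computation. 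None of them vanishes, yet only \scalebox{\graphscale}{\twoonetwolegs{i}{$\omega$}{$\omega$}{$\omega$}{j}} appears in the proposition. Your strategy therefore yields a strictly larger list and cannot close as written.

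The mechanism that disposes of these extra components is not symmetry but a further instance of the reduction argument of Section~\ref{subsec:reduction-argt}, which is what the paper's (very terse) proof is pointing at when it cites Proposition~\ref{prop:notessential}: for a fixed leg multiset, $\delta_s^\bullet$ organises the collection star $\to$ dumbbells $\to$ chains into a subquotient complex, and for $\{i,j,\omega,\omega,\omega\}$ this complex (of dimensions $1,3,3$) has one-dimensional cohomology concentrated at the chain \scalebox{\graphscale}{\twoonetwolegs{i}{$\omega$}{$\omega$}{$\omega$}{j}}, so only that representative need be retained; in the family of Example~\ref{eg:eg-nonnec} the analogous complex is acyclic and the whole family is discarded. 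To repair your argument you would need either to carry out this homological reduction for the $(0,2,3)$ family explicitly (and justify replacing the family by its cohomology, as in the proof of Proposition~\ref{prop:notessential}), or to weaken the claim to listing a set of components sufficient for computing cohomology rather than all non-vanishing components with $w\leq 3$.
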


  This extends the result of Payne-Willwacher, who computed the essential connected components of excess up to $3$. These components are summarized in the table below.
  \begin{figure}[ht]\label{fig:concgen0123}
  \begin{center}
    \begin{tabular}{|c  c | c  c  c |}
      \hline
      \multicolumn{2}{|c|}{$E=0$} &
      \multicolumn{3}{|c|}{$E=2$}
      \\
      
      \oleg{j} & \tripleo &
      \eleg{j} & \threelegs{$\epsilon$}{$\omega$}{$\omega$} & \threelegs{i}{j}{$\omega$}
      \\
      \hline
      \multicolumn{2}{|c|}{$E=1$} &
      \multicolumn{3}{|c|}{$E=3$}
      \\
      
      \oleg{$\epsilon$} & \threelegs{j}{$\omega$}{$\omega$} &
      \eleg{$\epsilon$} & \threelegs{$\epsilon$}{j}{$\omega$} & \twotwolegs{i}{$\omega$}{$\omega$}{j} 
      \\
      \hline
    \end{tabular}
  \end{center}
  \caption{Essential Connected Components with Excess $\leq 3$ }
  \end{figure}

  \begin{proof}[Proof of Proposition \ref{prop:concgen4}]
  We check all combinatoric solutions to the equation \ref{eq:E-component}.
  In this low excess case, generators are known to satisfy ${h_i(C) = 0}$ (see \cite{PW23}, Lemma 4.4).
  
  Observe that as mentioned above, the component $\scalebox{\graphscale}{\threelegs{$\epsilon$}{$\omega$}{$\epsilon$}} $ vanishes in our graph complex because of the odd symmetry of the two $\epsilon$-legs.
  Further, considering the simplifications from Proposition \ref{prop:notessential}, we may restrict to the essential connected components listed Proposition \ref{prop:concgen4}.
  \end{proof} 

  \begin{remark}\label{rmk:expgrowth}
    Note that the number of generators grows exponentially with excess. This is a direct consequence of the additivity of excess on connected components. As a consequence, the complexity of explicit computation of the cohomology of the $B_{g,n}$ grows exponentially with excess.
  \end{remark}

\subsection{Generators for Cohomology}\label{subsec:GenGraphs}

We now assemble the obtained essential connected components in all possible ways to obtain generators for $B_{g,n}$. Since the excess is additive over the blown-up connected components, any such combination will determine a partition $\lambda$ of four. We classify the generators by the partition they determine. 

\vskip \fsize
\bm{$\lambda= 4.$}
\hfill\\
\resizebox{\textwidth}{!}{
  \begin{tabular}{l | c | c}
    Degree & Generator & Module \\
    \hline
    $\frac{3}{2} (g-1) + 12$ &
    $\Gamma^{(4)}_{\epsilon ij } = \oleg{1} \ldots \oleg{n} \tripleo\ldots\tripleo \threelegs{$\epsilon$}{i}{j}$ &
    $V_{31^{n-3}} \oplus V_{21^{n-2}}$ \\
    $\frac{3}{2} (g-1) + 12$ &
    $\Gamma^{(4)}_{ ijk} = \oleg{1} \ldots \oleg{n} \tripleo\ldots\tripleo  \fourlegs{i}{$\omega$}{j}{k}$ &
    $V_{41^{n-4}} \oplus V_{31^{n-3}}$ \\
    $\frac{3}{2} (g-1) + 13$ &
    $\Gamma^{(4)}_{i\cdot jk } = \oleg{1} \ldots \oleg{n} \tripleo\ldots\tripleo  \twotwolegs{i}{$\omega$}{j}{k}$ &
    $V_{41^{n-4}} \oplus V_{321^{n-5}} \oplus V_{31^{n-3}} \oplus V_{2^21^{n-4}} \oplus V_{31^{n-3}} \oplus V_{21^{n-2}}$ \\
    $\frac{3}{2} (g-3) + 14$ &
    $\Gamma^{(4)}_{\epsilon  i} = \oleg{1} \ldots \oleg{n} \tripleo\ldots\tripleo \fourlegs{i}{$\omega$}{$\omega$}{$\epsilon$}$ & $V_{21^{n-2}} \oplus V_{1^n}$ \\
    $\frac{3}{2} (g-3) + 15$ &
    $\Gamma^{(4)}_{\epsilon i \cdot \omega} = \oleg{1} \ldots \oleg{n} \tripleo\ldots\tripleo \twotwolegs{$\epsilon$}{i}{$\omega$}{$\omega$}$ & $V_{21^{n-2}} \oplus V_{1^n}$ \\
    $\frac{3}{2} (g-3) + 15$ &
    $\Gamma^{(4)}_{\epsilon \cdot i} = \oleg{1} \ldots \oleg{n} \tripleo\ldots\tripleo  \twotwolegs{i}{$\omega$}{$\omega$}{$\epsilon$}$ &
    $V_{21^{n-2}} \oplus V_{1^n}$ \\
    $ \frac{3}{2} (g-3) + 16$ &
    $\Gamma^{(4)}_{ i\omega j} = \oleg{1} \ldots \oleg{n} \tripleo\ldots\tripleo  \twoonetwolegs{i}{$\omega$}{$\omega$}{$\omega$}{j}$ &
    $V_{2^21^{n-4}} \oplus V_{21^{n-2}} \oplus V_{1^n}$ \\
  \end{tabular}
}

\vskip \fsize
\bm{$\lambda= 31.$}
\hfill\\
\resizebox{\textwidth}{!}{
  \begin{tabular}{l | c | c}
    Degree & Generator & Module \\
    \hline
    $\frac{3}{2} (g-3) + 13$ &
    $\Gamma^{(31)}_{ \epsilon\epsilon;\epsilon} = \oleg{1} \ldots \oleg{n} \tripleo\ldots\tripleo \oleg{$\epsilon$} \eleg{$\epsilon$}$ &
    $V_{1^n}$ \\
    $\frac{3}{2} (g-3) + 14$ &
    $\Gamma^{(31)}_{ \epsilon\epsilon;i} = \oleg{1} \ldots \oleg{n} \tripleo\ldots\tripleo \threelegs{i}{$\omega$}{$\omega$} \eleg{$\epsilon$}$ &
    $V_{21^{n-2}} \oplus V_{1^n}$ \\
    $\frac{3}{2} (g-3) + 14$ &
    $\Gamma^{(31)}_{\epsilon\omega i;\epsilon } = \oleg{1} \ldots \oleg{n} \tripleo\ldots\tripleo \oleg{$\epsilon$} \threelegs{$\epsilon$}{i}{$\omega$}$ &
    $V_{21^{n-2}} \oplus V_{1^n}$ \\
    $\frac{3}{2} (g-3) + 15$ &
    $\Gamma^{(31)}_{\epsilon\omega i ; j } = \oleg{1} \ldots \oleg{n} \tripleo\ldots\tripleo \threelegs{j}{$\omega$}{$\omega$} \threelegs{$\epsilon$}{i}{$\omega$}$ &
    $V_{31^{n-3}} \oplus V_{2^21^{n-4}} \oplus V_{21^{n-2}} \oplus V_{21^{n-2}} \oplus V_{1^n}$ \\
    $\frac{3}{2} (g-3) + 15$ &
    $\Gamma^{(31)}_{ i\cdot j;\epsilon } = \oleg{1} \ldots \oleg{n} \tripleo\ldots\tripleo \oleg{$\epsilon$} \twotwolegs{j}{$\omega$}{$\omega$}{i}$ &
    $V_{31^{n-3}} \oplus V_{21^{n-2}}$ \\
    $\frac{3}{2} (g-3) + 16$ &
    $\Gamma^{(31)}_{i\cdot j ; k } = \oleg{1} \ldots \oleg{n} \tripleo\ldots\tripleo \threelegs{k}{$\omega$}{$\omega$} \twotwolegs{j}{$\omega$}{$\omega$}{i}$ &
    $V_{41^{n-4}} \oplus V_{321^{n-5}} \oplus V_{31^{n-3}} \oplus V_{2^21^{n-4}} \oplus V_{31^{n-3}} \oplus V_{21^{n-2}}$ 
  \end{tabular}
}

\vskip \fsize
\bm{$\lambda= 2^2.$}
\hfill\\
\resizebox{\textwidth}{!}{
  \begin{tabular}{l | c | c}
    Degree & Generator & Module \\
    \hline
    $\frac{3}{2} (g-1) + 11$ &
    $\Gamma^{(2^2)}_{\epsilon i ;\epsilon j } = \oleg{$1$} \ldots \oleg{n} \tripleo\ldots\tripleo \eleg{i} \eleg{j}$ &
    $V_{31^{n-3}} \oplus V_{21^{n-2}}$ \\
    $ \frac{3}{2} (g-1) + 12$ &
    $\Gamma^{(2^2)}_{ \epsilon i ;jk} = \oleg{$1$} \ldots \oleg{n} \tripleo\ldots\tripleo \eleg{i} \threelegs{j}{k}{$\omega$}$ &
    $V_{41^{n-4}} \oplus V_{321^{n-5}} \oplus V_{31^{n-3}} \oplus V_{2^21^{n-4}} \oplus V_{31^{n-3}} \oplus V_{21^{n-2}}$  \\
    $ \frac{3}{2} (g-3) + 14$ &
    $\Gamma^{(2^2)}_{ \epsilon i ; \epsilon } = \oleg{$1$} \ldots \oleg{n} \tripleo\ldots\tripleo \eleg{i} \threelegs{$\epsilon$}{$\omega$}{$\omega$}$ &
    $V_{21^{n-2}} \oplus V_{1^n}$ \\
    $ \frac{3}{2} (g-1) + 13$ &
    $\Gamma^{(2^2)}_{ ij;kl} = \oleg{$1$} \ldots \oleg{n} \tripleo\ldots\tripleo \threelegs{i}{j}{$\omega$} \threelegs{k}{l}{$\omega$}$ &
    $V_{51^{n-5}} \oplus V_{41^n-4} \oplus V_{3^21^{n-6}} \oplus V_{321^{n-5}} \oplus V_{2^21^{n-4}}$\footnotemark{} \\
    $ \frac{3}{2} (g-3) + 15$ &
    $\Gamma^{(2^2)}_{ \epsilon;ij} = \oleg{$1$} \ldots \oleg{n} \tripleo\ldots\tripleo \threelegs{$\epsilon$}{$\omega$}{$\omega$} \threelegs{i}{j}{$\omega$}$ &
    $V_{31^{n-3}} \oplus V_{21^{n-2}}$ \\
    $ \frac{3}{2} (g-5) + 17$ &
    $\Gamma^{(2^2)}_{ \epsilon;\epsilon} = \oleg{$1$} \ldots \oleg{n} \tripleo\ldots\tripleo \threelegs{$\epsilon$}{$\omega$}{$\omega$} \threelegs{$\epsilon$}{$\omega$}{$\omega$}$ &
    $V_{1^n}$ 
  \end{tabular}
}
\footnotetext{Here, one needs to pay attention to the "block symmetry" allowing to permute the pairs $\{i,j\}$ and $\{k,l\}$ without sign, whence the induced module is the 3-dimensional $V_4 \oplus V_{2^2}$.}

\vskip \fsize
\bm{$\lambda= 21^2.$}
\hfill\\
\resizebox{\textwidth}{!}{
  \begin{tabular}{l | c | c}
    Degree & Generator & Module \\
    \hline
    $\frac{3}{2} (g-3) + 13$ &
    $\Gamma^{(21^2)}_{ \epsilon i} = \oleg{$1$} \ldots \oleg{n} \tripleo\ldots\tripleo \oleg{$\epsilon$} \oleg{$\epsilon$} \eleg{i}$ &
    $V_{21^{n-2}} \oplus V_{1^n}$ \\
    $\frac{3}{2} (g-3) + 14$ &
    $\Gamma^{(21^2)}_{ \epsilon i ;j} = \oleg{$1$} \ldots \oleg{n} \tripleo\ldots\tripleo \oleg{$\epsilon$} \threelegs{j}{$\omega$}{$\omega$} \eleg{i}$ &
    $V_{31^{n-3}} \oplus V_{2^21^{n-4}} \oplus V_{21^{n-2}} \oplus V_{21^{n-2}} \oplus V_{1^n}$ \\
    $\frac{3}{2} (g-3) + 15$ &
   $ \Gamma^{(21^2)}_{\epsilon i ;j;k } = \oleg{$1$} \ldots \oleg{n} \tripleo\ldots\tripleo \threelegs{k}{$\omega$}{$\omega$} \threelegs{j}{$\omega$}{$\omega$} \eleg{i}$ &
   $V_{41^{n-4}} \oplus V_{321^{n-5}} \oplus V_{31^{n-3}} \oplus V_{2^21^{n-4}} \oplus V_{31^{n-3}} \oplus V_{21^{n-2}}$ \\
   $\frac{3}{2} (g-3) + 14$ &
   $\Gamma^{(21^2)}_{ij} = \oleg{$1$} \ldots \oleg{n} \tripleo\ldots\tripleo \oleg{$\epsilon$} \oleg{$\epsilon$} \threelegs{j}{i}{$\omega$}$ &
   $V_{31^{n-3}} \oplus V_{21^{n-2}}$ \\
   $\frac{3}{2} (g-3) + 15$ &
   $\Gamma^{(21^2)}_{ ij;k} = \oleg{$1$} \ldots \oleg{n} \tripleo\ldots\tripleo \oleg{$\epsilon$} \threelegs{k}{$\omega$}{$\omega$} \threelegs{j}{i}{$\omega$}$ &
   $V_{41^{n-4}} \oplus V_{321^{n-5}} \oplus V_{31^{n-3}} \oplus V_{2^21^{n-4}} \oplus V_{31^{n-3}} \oplus V_{21^{n-2}}$\\
   $\frac{3}{2} (g-3) + 16$ &
   $\Gamma^{(21^2)}_{ ij;k;l} = \oleg{$1$} \ldots \oleg{n} \tripleo\ldots\tripleo \threelegs{l}{$\omega$}{$\omega$} \threelegs{k}{$\omega$}{$\omega$} \threelegs{j}{i}{$\omega$}$ &
   $ V_{51^{n-5}} \oplus V_{421^{n-6}} \oplus V_{41^{n-4}} \oplus V_{3^21^{n-6}} \oplus V_{321^{n-5}} \oplus V_{41^{n-4}} \oplus V_{321^{n-5}} \oplus V_{31^{n-3}} \oplus V_{2^21^{n-4}} $ \\
   $\frac{3}{2} (g-5) + 16$ &
   $\Gamma^{(21^2)}_{ \epsilon} = \oleg{$1$} \ldots \oleg{n} \tripleo\ldots\tripleo \oleg{$\epsilon$} \oleg{$\epsilon$} \threelegs{$\epsilon$}{$\omega$}{$\omega$}$ &
   $V_{1^{n}}$ \\
   $\frac{3}{2} (g-5) + 17$ &
   $\Gamma^{(21^2)}_{ \epsilon;i} = \oleg{$1$} \ldots \oleg{n} \tripleo\ldots\tripleo \oleg{$\epsilon$} \threelegs{i}{$\omega$}{$\omega$} \threelegs{$\epsilon$}{$\omega$}{$\omega$}$ &
   $V_{21^{n-2}} \oplus V_{1^n}$ \\
   $\frac{3}{2} (g-5) + 18$ &
   $\Gamma^{(21^2)}_{ \epsilon;i;j} =\oleg{$1$} \ldots \oleg{n} \tripleo\ldots\tripleo \threelegs{i}{$\omega$}{$\omega$} \threelegs{j}{$\omega$}{$\omega$} \threelegs{$\epsilon$}{$\omega$}{$\omega$}$ &
   $V_{31^{n-3}} \oplus V_{21^{n-2}}$
  \end{tabular}
}

\vskip \fsize
\bm{$\lambda= 1^4.$}
\hfill\\
\resizebox{\textwidth}{!}{
  \begin{tabular}{l | c | c}
    Degree & Generator & Module \\
    \hline
    $\frac{3}{2} (g-5) + 15$ &
    $\Gamma^{(1^4)}_{\omega\epsilon} =
    \oleg{$1$} \ldots \oleg{n} 
    \tripleo\ldots\tripleo
    \oleg{$\epsilon$}
    \oleg{$\epsilon$} 
    \oleg{$\epsilon$}
    \oleg{$\epsilon$}$ &
    $ V_{1^n}$ \\
    $\frac{3}{2} (g-5) + 16$ &
    $\Gamma^{(1^4)}_{i} =
    \oleg{$1$} \ldots \oleg{n}
    \tripleo\ldots\tripleo 
    \oleg{$\epsilon$}
    \oleg{$\epsilon$} 
    \oleg{$\epsilon$}
    \threelegs{i}{$\omega$}{$\omega$}$ &
    $ V_{21^{n-2}} \oplus V_{1^n}$  \\
    $\frac{3}{2} (g-5) + 17$ &
    $\Gamma^{(1^4)}_{i;j} =
    \oleg{$1$} \ldots \oleg{n} 
    \tripleo\ldots\tripleo
    \oleg{$\epsilon$}
    \oleg{$\epsilon$} 
    \threelegs{i}{$\omega$}{$\omega$} 
    \threelegs{j}{$\omega$}{$\omega$}$ &
    $ V_{31^{n-3}} \oplus V_{21^{n-2}} $ \\
    $\frac{3}{2} (g-5) + 18$ &
    $\Gamma^{(1^4)}_{i;j;k} =
    \oleg{$1$} \ldots \oleg{n} 
    \tripleo\ldots\tripleo
    \oleg{$\epsilon$}
    \threelegs{i}{$\omega$}{$\omega$} 
    \threelegs{j}{$\omega$}{$\omega$} 
    \threelegs{k}{$\omega$}{$\omega$}$ &
    $ V_{41^{n-4}} \oplus V_{31^{n-3}} $ \\
    $\frac{3}{2} (g-5) + 19$ &
    $\Gamma^{(1^4)}_{i;j;k;l} =
    \oleg{$1$} \ldots \oleg{n} 
    \tripleo\ldots\tripleo
    \threelegs{i}{$\omega$}{$\omega$} 
    \threelegs{j}{$\omega$}{$\omega$} 
    \threelegs{k}{$\omega$}{$\omega$} 
    \threelegs{l}{$\omega$}{$\omega$}$ &
    $ V_{51^{n-5}} \oplus V_{41^{n-4}} $ 
  \end{tabular}
}
\vskip \fsize

In general, the boundary operator on our graph complex does not preserve excess, but it does preserve its parity. This comes from the ``$-2w$'' term in the definition of excess which is not preserved by $\d_{\omega}$. Therefore, in addition to the listed generators of excess four, we will also need to consider the following generators of excess two and zero.

\vskip \fsize
\resizebox{0.9\textwidth}{!}{
  \begin{tabular}{l | c | c}
    Degree & Generator & Module \\
    \hline
    $\frac{3}{2} (g-3) + 13$ &
    $\Gamma^{(2)}_{\epsilon} = \oleg{1} \ldots \oleg{n} \tripleo \ldots \tripleo \threelegs{$\epsilon$}{$\omega$}{$\omega$}$&
    $V_{1^n}$ \\
    $\frac{3}{2} (g-1) + 10$ &
    $\Gamma^{(2)}_{\epsilon i} =  \oleg{1} \ldots \oleg{n} \tripleo \ldots \tripleo \eleg{i}$ &
    $ V_{21^{n-2}} \oplus V_{1^n}$ \\
    $\frac{3}{2} (g-1) + 11$ &
    $\Gamma^{(2)}_{ij} = \oleg{1} \ldots \oleg{n} \tripleo \ldots \tripleo \threelegs{i}{j}{$\omega$}$ &
    $ V_{31^{n-3}} \oplus V_{21^{n-2}}$ \\
    $\frac{3}{2} (g-3) + 12$ &
    $\Gamma^{(2)}_{\omega\epsilon\omega\epsilon} =\oleg{1} \ldots \oleg{n} \tripleo \ldots \tripleo \oleg{$\epsilon$} \oleg{$\epsilon$}$ &
    $ V_{1^n}$ \\
    $\frac{3}{2} (g-3) + 13$ &
    $\Gamma^{(2)}_{\omega\epsilon i} = \oleg{1} \ldots \oleg{n} \tripleo \ldots \tripleo \oleg{$\epsilon$} \threelegs{i}{$\omega$}{$\omega$}$ &
    $ V_{21^{n-2}} \oplus V_{1^n}$ \\
    $\frac{3}{2} (g-3) + 14$ &
    $\Gamma^{(2)}_{i;j} = \oleg{1} \ldots \oleg{n} \tripleo \ldots \tripleo \threelegs{i}{$\omega$}{$\omega$} \threelegs{j}{$\omega$}{$\omega$}$ &
    $ V_{31^{n-3}} \oplus  V_{21^{n-2}}$\\
    %
    %
    $\frac{3}{2} (g-1) + 9$ &
    $\Gamma^{(0)} = \oleg{1} \ldots \oleg{n} \tripleo \ldots \tripleo$ &
    $ V_{1^{n}} $
  \end{tabular}
}

\section{Computing Cohomology}\label{sec:Cohomology}

In this section, we compute the Cohomology groups of the excess four graph complexes, thus proving Theorem \ref{thm:cohomologyBgn}. 

To do so, we recall that any generator of a graph complex $B_{g,n}$ of excess four we consider is assembled of essential connected components whose excess adds up to four, two or zero. Hence, any graph comes with an associated partition $\lambda$ of four. We consider the filtration on $B_{g,n}$ induced by the lexicographic order on the partitions of four and study the corresponding spectral sequence. 

\begin{proof}[Proof of Theorem \ref{thm:cohomologyBgn}]

For each fixed partition, we first study the cohomology groups of the obtained subcomplex of $B_{g,n}$ generated only by the generators corresponding to the chosen partition. Note that this corresponds to the first page of our spectral sequence.

We include the impact of generators of Excess zero and two on cancellation on later pages of the spectral sequence right away when it makes the writing more concise.

\vskip \fsize
\bm{$\lambda= 4.$}
The generators are determined by their excess four component. The chain of generators with component \scalebox{\graphscale}{\threelegs{$\epsilon$}{i}{j}}, with \scalebox{\graphscale}{$\fourlegs{$\omega$}{i}{j}{k}$}, and with \scalebox{\graphscale}{$\twotwolegs{$\omega$}{i}{j}{k}$} contributes to the cohomology on the first page of the spectral sequence with a $V_{321^{n-5}} \oplus V_{2^21^{n-4}}$ term in degree $ d = \frac{3}{2} (g-1) + 13$.

The graphs with component \scalebox{\graphscale}{\fourlegs{i}{$\omega$}{$\omega$}{$\epsilon$}}, 
\scalebox{\graphscale}{\twotwolegs{i}{$\epsilon$}{$\omega$}{$\omega$}}, 
\scalebox{\graphscale}{\twotwolegs{i}{$\omega$}{$\omega$}{$\epsilon$}} 
and \scalebox{\graphscale}{\twoonetwolegs{i}{$\omega$}{$\omega$}{$\omega$}{j}} contribute together through a $V_{2^21^{n-4}}$ term in degree $ d = \frac{3}{2} (g-3) + 16$  when $n>1$. When $n=1$, the contribution is solely a $V_1$ term in degree $ d = \frac{3}{2} (g-3) + 15$, induced by the class of the generator with the \scalebox{\graphscale}{\twotwolegs{i}{$\omega$}{$\omega$}{$\epsilon$}} component.

\vskip \fsize
\bm{$\lambda= 31.$ }
The generators with a single excess three and one excess one connected component all have genus of at least three, coming from their excess three part. When $n>1$, they contribute to a $V_{41^{n-4}} \oplus V_{321^{n-5}}$ term in cohomology in degree $ d = \frac{3}{2} (g-3) + 16$ represented by the graph
\begin{center}
  \resizebox{0.6\hsize}{!}{
  $\oleg{1} \ldots \oleg{n} \tripleo\ldots\tripleo \threelegs{k}{$\omega$}{$\omega$} \twotwolegs{j}{$\omega$}{$\omega$}{i}.$
}
\end{center}
When $n=1$, the only contribution is a $V_1$ term in degree $ d = {\frac{3}{2} (g-3) + 14} $ represented by
\begin{center}
  \resizebox{0.65\hsize}{!}{
    $\Gamma^{(31)}_{ \epsilon\epsilon;i} = \oleg{1} \ldots \oleg{n} \tripleo\ldots\tripleo \threelegs{i}{$\omega$}{$\omega$} \eleg{$\epsilon$}
    . $
  }
\end{center}

\vskip \fsize
\bm{$\lambda= 2^2.$ }
The generators without any \scalebox{\graphscale}{$\threelegs{$\epsilon$}{$\omega$}{$\omega$}$} component, along with the lower excess graphs $\Gamma^{(2)}_{\epsilon i}$ and $\Gamma^{(0)}$ (considering cancellation on later pages of the spectral sequence), contribute to a $V_{31^{n-3}} \oplus V_{21^{n-2}}$ term in degree $ d = \frac{3}{2} (g-1) + 12$ represented by
\begin{center}
  \resizebox{0.65\hsize}{!}{
    $\Gamma^{(2^2)}_{ \epsilon k ;ji} = \oleg{$1$} \ldots \oleg{n} \tripleo\ldots\tripleo \eleg{k} \threelegs{j}{i}{$\omega$} $
  }
\end{center}
and a $V_{51^{n-5}} \oplus V_{3^21^{n-6}}$ term in degree $ d = \frac{3}{2} (g-1) + 13$ represented by
\begin{center}
  \resizebox{0.6\hsize}{!}{
    $ \oleg{$1$} \ldots \oleg{n} \tripleo\ldots\tripleo \threelegs{i}{j}{$\omega$} \threelegs{k}{l}{$\omega$}
    . $
  }
\end{center}

Note that this only affects complexes with $n>1$. For $n=1$, this part of the complex cancels out on the first page of the spectral sequence.

The generators with precisely one \scalebox{\graphscale}{$\threelegs{$\epsilon$}{$\omega$}{$\omega$}$} component together with the graph $\Gamma^{(2)}_{\epsilon}$ contribute to a $V_{31^{n-3}}$ term in degree $ d = \frac{3}{2} (g-3) + 15$ represented by
\begin{center}
  \resizebox{0.6\hsize}{!}{
    $ \oleg{$1$} \ldots \oleg{n} \tripleo\ldots\tripleo \threelegs{$\epsilon$}{$\omega$}{$\omega$} \threelegs{i}{j}{$\omega$}
  . $
  }
\end{center}

Once again, there is no impact in the $n=1$ case.

Finally, the generator
\begin{center}
  \resizebox{0.65\hsize}{!}{
    $\Gamma^{(2^2)}_{ \epsilon;\epsilon} = \oleg{$1$} \ldots \oleg{n} \tripleo\ldots\tripleo \threelegs{$\epsilon$}{$\omega$}{$\omega$} \threelegs{$\epsilon$}{$\omega$}{$\omega$} $
  }
\end{center}
contributes to a $V_{1^n}$ term in degree $ d = \frac{3}{2} (g-5) + 17$.

\vskip \fsize
\bm{$\lambda= 21^2.$ }
We split according to the excess two component. The generators with excess two component \scalebox{\graphscale}{$\threelegs{$\epsilon$}{$\omega$}{$\omega$}$} contribute to a $V_{31^{n-3}} $ term in degree $d = \frac{3}{2} (g-5) + 18$ represented by
\begin{center}
  \resizebox{0.68\hsize}{!}{
    $\Gamma^{(21^2)}_{ \epsilon;i;j} =\oleg{$1$} \ldots \oleg{n} \tripleo\ldots\tripleo \threelegs{i}{$\omega$}{$\omega$} \threelegs{j}{$\omega$}{$\omega$} \threelegs{$\epsilon$}{$\omega$}{$\omega$} $
  }
\end{center}
for $n>1$. When $n=1$, this complex does not affect cohomology.

If $n> 1$, the complex given by the graphs with excess two component \scalebox{\graphscale}{\eleg{i}} or \scalebox{\graphscale}{\threelegs{i}{j}{$\omega$}} contributes through a $V_{31^{n-3}}$ term in degree $ {d = \frac{3}{2} (g-3) + 15}$ represented by
\begin{center}
  \resizebox{0.65\hsize}{!}{
    $\oleg{$1$} \ldots \oleg{n} \tripleo\ldots\tripleo \threelegs{k}{$\omega$}{$\omega$} \threelegs{j}{$\omega$}{$\omega$} \eleg{i} $
  }
\end{center}
and to $V_{51^{n-5}} \oplus V_{421^{n-6}} \oplus V_{3^21^{n-6}}$ in degree $ d = \frac{3}{2} (g-3) + 16$. In the case $n=1$, the contribution comes from the only present graph of the complex
\begin{center}
  \resizebox{0.55\hsize}{!}{
    $\Gamma^{(21^2)}_{ \epsilon i} = \tripleo\ldots\tripleo \oleg{$\epsilon$} \oleg{$\epsilon$} \eleg{1} ,$
  }
\end{center}
which contributes a $V_{1}$ term in degree $ d = \frac{3}{2} (g-3) + 13$.

\vskip \fsize
\bm{$\lambda= 1^4.$ }
The generators with four excess one connected components all have genus $g\geq 5$. On the first page of the spectral sequence, this results in a $V_{51^{n-5}}$ term in degree $ d = \frac{3}{2} (g-5) + 19$ represented by
\begin{center}
  \resizebox{0.65\hsize}{!}{
    $
  \oleg{$1$} \ldots \oleg{n} 
  \tripleo\ldots\tripleo
  \threelegs{i}{$\omega$}{$\omega$} 
  \threelegs{j}{$\omega$}{$\omega$} 
  \threelegs{k}{$\omega$}{$\omega$} 
  \threelegs{l}{$\omega$}{$\omega$} $
  }
\end{center}
if $n>1$. When $n = 1$, there is no impact on cohomology.

\vskip \fsize
\textbf{Lower Excess components.}
In addition to the graphs $\Gamma^{(0)}, \Gamma^{(2)}_{\epsilon}$ and $\Gamma^{(2)}_{\epsilon i}$ already mentioned above, one needs to take into account the generators $\Gamma^{(2)}_{\omega\epsilon\omega\epsilon}, \Gamma^{(2)}_{\omega\epsilon i}$ and $\Gamma^{(2)}_{i;j}$, which contribute a $V_{31^{n-3}}$ term in degree ${d = \frac{3}{2} (g-3) + 14}$, as well as the generator $\Gamma^{(2)}_{ij}$ contributing a $V_{31^{n-3}} \oplus V_{21^{n-2}}$ in degree $ d = \frac{3}{2} (g-1) + 11$.
Once again, the contribution vanishes when $n=1$.

\textbf{Later pages of the spectral sequence.}
Let us now assemble all the results
 above and deduce the cohomology groups.

From the preliminary observation above, we see that for $n>1$, the only possible cancellation on the later pages of the spectral sequence are between $V_{31^{n-3}}$ and $V_{21^{n-2}}$ terms represented by $\Gamma^{(2)}_{i;j}$, $\Gamma^{(2)}_{ij}$, $\Gamma^{(21^2)}_{ \epsilon;i;j}$, and $\Gamma^{(2^2)}_{ \epsilon k ;ji}$. And indeed, we have 
$\Gamma^{(21^2)}_{ \epsilon;i;j} \in \Im \d_{\omega} (\Gamma^{(2)}_{i;j})$
and
$\Gamma^{(2^2)}_{ \epsilon k ;ji} \in \Im \d_{\omega} (\Gamma^{(2)}_{ij})$.

In the case $n=1$, the cancellation on later pages of the spectral sequence comes from 
$\Gamma^{(21^2)}_{ \epsilon i} \simeq \Gamma^{(21^2)}_{ \epsilon} \overset{\d_{\omega}}{\mapsto} \Gamma^{(2^2)}_{ \epsilon;\epsilon} + (\ldots) $
as well as from
$\Gamma^{(31)}_{ \epsilon\epsilon;i} \overset{\d_{\omega}}{\mapsto} \Gamma^{(4)}_{\epsilon \cdot i} + (\ldots) .$

Finally, taking into account the variations in small genus as well as small $n$, we obtain the listed cohomology groups for the excess four graph complexes.
\end{proof}

\begin{remark}
  It would be interesting to obtain similar explicit results for a wider range of $(g,n)$. However, we note that our approach hardly enables computation by hand for any higher excess. Reason for this are the exponential growth of the complexity of computation with excess (see Remark \ref{rmk:expgrowth}), as well as properties of the generating graphs related to excess such as Lemma 4.4. in \cite{PW23}. As a consequence, an algorithmic support or more efficient tools would be required.
\end{remark}

\bibliographystyle{amsalpha}
\bibliography{citation}
	
\end{document}